\documentclass[12pt]{article}
\usepackage{amsmath,amsthm,amssymb}
\usepackage{amssymb,latexsym}

\newtheorem{theorem}{Theorem}
\newtheorem{conjecture}{Conjecture}
\newtheorem{lemma}{Lemma}

\textheight=21.5cm
\textwidth=16cm
\hoffset=-1cm
\parindent=16pt

\begin{document}

\baselineskip=17pt

%

\title{\bf On an equation by primes with one Linnik prime}

\author{\bf S. I. Dimitrov}

\date{}

\maketitle
\begin{abstract}
Let $[\, \cdot\,]$ be the floor function.
In this paper, we  prove that when $1<c<\frac{16559}{15276}$, then
every sufficiently large positive integer $N$ can be represented in the form
\begin{equation*}
N=[p^c_1]+[p^c_2]+[p^c_3]\,,
\end{equation*}
where $p_1,p_2,p_3$ are primes, such that $p_1=x^2 + y^2 +1$.\\
\quad\\
\textbf{Keywords}: Diophantine equation $\cdot$ Prime $\cdot$ Exponential sum $\cdot$ Asymptotic formula\\
\quad\\
{\bf  2020 Math.\ Subject Classification}: 11L07 $\cdot$ 11L20 $\cdot$  11P32
\end{abstract}

\section{Introduction and main result}
\indent

A turning point in analytic number theory is 1937, when Vinogradov \cite{Vinogradov1}
proved the ternary Goldbach problem.
He showed that every sufficiently large odd integer $N$ can be represented  in the form
\begin{equation*}
N=p_1+p_2+p_3,
\end{equation*}
where $p_1,\, p_2,\, p_3$ are prime numbers.

Helfgott \cite{Helfgott} recently showed that this is true for all odd $N\geq7$.

The enormous consequences of Vinogradov's \cite{Vinogradov2} method for estimating exponential sums over primes
find objective expression in the hundreds of articles on diophantine equations and inequalities
by primes. Source of detailed proof of Vinogradov's theorem, beginning with an historical
perspective along with an overview of essential lemmas and theorems,
can be found in monograph of Rassias \cite{Rassias}.
For almost a century, Vinogradov's three primes theorem has been proved many times with prime numbers of a special form.
Recent interesting results in this regard are for example \cite{Dimitrov2}, \cite{Maier-Rassias1}, \cite{Maier-Rassias2}, \cite{Maier-Rassias3}.
As an analogue of the ternary Goldbach problem, in 1995, Laporta and Tolev  \cite{Laporta-Tolev}
investigated the diophantine equation
\begin{equation}\label{1}
[p^c_1]+[p^c_2]+[p^c_3]=N\,,
\end{equation}
where $p_1,\, p_2,\, p_3$ are primes, $c > 1$ and  $N$ is positive integer.
For $1<c <\frac{17}{16}$ and $\varepsilon>0$ they proved that for the sum
\begin{equation*}
R(N)=\sum\limits_{[p^c_1]+[p^c_2]+[p^c_3]=N}\log p_1\log p_2\log p_3
\end{equation*}
the asymptotic formula \begin{equation*}
R(N)=\frac{\Gamma^3\left(1 + \frac{1}{c}\right)}{\Gamma\left(\frac{3}{c}\right)}N^{\frac{3}{c}-1}
+\mathcal{O}\Big(N^{\frac{3}{c}-1}\exp\big(-(\log N)^{\frac{1}{3}-\varepsilon}\big)\Big)
\end{equation*} holds.

Afterwards the result of Laporta and Tolev was sharpened by Kumchev and Nedeva \cite{Kumchev-Nedeva}
to $1<c <\frac{12}{11}$, by Zhai and Cao  \cite{Zhai-Cao} to $1<c<\frac{258}{235}$,
by Cai \cite{Cai} to $1<c <\frac{137}{119}$, by Zhang and Li \cite{Zhang-Li} to $1<c <\frac{3113}{2703}$
and finally by Baker \cite{Baker} to $1<c <\frac{3581}{3106}$ and this is the best result up to now.

On the  other hand in 1960 Linnik \cite{Linnik} showed that there exist infinitely many prime numbers of the form
$p=x^2 + y^2 +1$, where $x$ and $y$ are integers. More precisely he proved the asymptotic formula
\begin{equation*}
\sum_{p\leq X}r(p-1)=\pi\prod_{p>2}\bigg(1+\frac{\chi_4(p)}{p(p-1)}\bigg)\frac{X}{\log X}+
\mathcal{O}\bigg(\frac{X(\log\log X)^7}{(\log X)^{1+\theta_0}}\bigg)\,,
\end{equation*}
where $r(k)$ is the number of solutions of the
equation $k=x^2 + y^2$ in integers, $\chi_4(k)$ is the non-principal character modulo 4 and
\begin{equation}\label{theta0}
\theta_0=\frac{1}{2}-\frac{1}{4}e\log2=0.0289...
\end{equation}
Recently the author \cite{Dimitrov4} showed
that  for any fixed $1<c<\frac{427}{400}$,  every sufficiently large positive number $N$
and a small constant $\varepsilon>0$, the diophantine inequality
\begin{equation*}
|p_1^c+p_2^c+p_3^c-N|<\varepsilon
\end{equation*}
has a solution in primes $p_1,\,p_2,\,p_3$, such that $p_1=x^2 + y^2 +1$.

Given the last result, it is natural to expect that the diophantine equation \eqref{1}
has a solution in primes $p_1,\,p_2,\,p_3$, such that $p_1=x^2 + y^2 +1$.
Let $N$ is a sufficiently large positive integer and
\begin{equation}\label{X}
X=N^{\frac{1}{c}}\,.
\end{equation}
Define \begin{equation}\label{Gamma}
\Gamma= \sum\limits_{X/2<p_1,p_2,p_3\leq X\atop{[p^c_1]+[p^c_2]+[p^c_3]=N}}r(p_1-1)\log p_1\log p_2\log p_3\,.
\end{equation}
We establish the following theorem.
\begin{theorem}\label{Theorem} Let $1<c<\frac{16559}{15276}$.
Then for every sufficiently large positive integer $N$  the asymptotic formula
\begin{equation}\label{asymptoticformula1}
\Gamma=\pi\prod\limits_p \left(1+\frac{\chi_4(p)}{p(p-1)}\right)
\frac{\Gamma^3\left(1+\frac{1}{c}\right)}{\Gamma\left(\frac{3}{c}\right)}\left(1-\frac{1}{2^{3-c}}\right)N^{\frac{3}{c}-1}
+\mathcal{O}\Bigg(\frac{N^{\frac{3}{c}-1}  (\log\log N)^5}{(\log N)^{\theta_0}}\Bigg)
\end{equation}
holds.  Here $\theta_0$ is defined by \eqref{theta0}.
\end{theorem}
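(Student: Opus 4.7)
The approach is the Hardy--Littlewood circle method, tailored to the situation where the constraint is an integer equation and one of the variables carries the Linnik weight $r(p-1)$. Using the orthogonality relation $\int_0^1 e(\alpha m)\,d\alpha=\mathbf{1}_{m=0}$ valid for $m\in\mathbb{Z}$, I would first write
\begin{equation*}
\Gamma=\int_0^1 S_1(\alpha)\,S_2(\alpha)^2\,e(-\alpha N)\,d\alpha,
\end{equation*}
where
\begin{equation*}
S_1(\alpha)=\sum_{X/2<p\leq X}r(p-1)\log p\cdot e(\alpha[p^c]),\qquad S_2(\alpha)=\sum_{X/2<p\leq X}\log p\cdot e(\alpha[p^c]).
\end{equation*}
Since $c$ is non-integer and close to $1$, the sum $S_2$ carries no spikes at rationals, so I would split $[0,1]$ into a single major arc $\mathfrak{M}=\{\alpha:\|\alpha\|\le X^{-c}(\log X)^A\}$ and its complement $\mathfrak{m}$.

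On $\mathfrak{M}$ I would peel off the fractional part via $e(\alpha[p^c])=e(\alpha p^c)e(-\alpha\{p^c\})$, expand $e(-\alpha\{p^c\})$ by Vaaler's trigonometric approximation, and replace the prime sums by their expected main terms: $S_2(\alpha)$ by $\int_{X/2}^X e(\alpha t^c)\,dt/\log t$ via the prime number theorem with classical error, and $S_1(\alpha)$ by
\begin{equation*}
\pi\prod_p\left(1+\tfrac{\chi_4(p)}{p(p-1)}\right)\int_{X/2}^X e(\alpha t^c)\,\frac{dt}{\log t}
\end{equation*}
with error $O\bigl(X(\log\log X)^7(\log X)^{-1-\theta_0}\bigr)$ inherited from Linnik's asymptotic. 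Multiplying out the three smoothed sums, evaluating the resulting singular integral by the substitution $t=N^{1/c}u$, and noting that restricting $u\in(1/2,1]$ accounts exactly for the factor $(1-2^{c-3})$, one produces the gamma factor $\Gamma^3(1+1/c)/\Gamma(3/c)$ and recovers the main term of \eqref{asymptoticformula1}.

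On $\mathfrak{m}$ I would invoke the triple-product bound
\begin{equation*}
\left|\int_{\mathfrak{m}} S_1 S_2^2\,e(-\alpha N)\,d\alpha\right|\leq \sup_{\alpha\in\mathfrak{m}}|S_2(\alpha)|\cdot\left(\int_0^1|S_1|^2 d\alpha\right)^{1/2}\left(\int_0^1|S_2|^2 d\alpha\right)^{1/2}.
\end{equation*}
Parseval reduces the two $L^2$ integrals to diagonal sums of sizes $\sum_{p\leq X}1$ and $\sum_{p\leq X}r(p-1)^2$, both being $\ll X(\log X)^{O(1)}$. The pointwise bound on $S_2$ is obtained by applying Vaaler's approximation to $\{p^c\}$, using Vaughan's identity to split the resulting sums $\sum_p\log p\cdot e(\alpha p^c+hp)$ into Type I and Type II bilinear sums, and finally estimating these by repeated van der Corput differencing together with suitable exponent pairs. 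The numerical value $c<16559/15276$ is precisely what emerges from optimising the admissible Type II range against the requirement that the total saving beat the $(\log N)^{-\theta_0}$ loss coming from Linnik's weight.

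\textbf{Main obstacle.} The technically decisive step is the pointwise minor-arc estimate for $S_2$: one needs a uniform saving $|S_2(\alpha)|\ll X(\log X)^{-A}$ on $\mathfrak{m}$ for every fixed $A$. The floor function $[p^c]$ (instead of $p^c$) forces an extra Vaaler-generated linear phase $hp$ to be carried through the whole argument, and each successive van der Corput iteration simultaneously tightens the admissible range of $c$ and the log-exponent in the saving; it is precisely the balance between this analytic saving and the $(\log N)^{-\theta_0}$ loss built into the Linnik weight on $p_1$ that dictates the final exponent $16559/15276$.
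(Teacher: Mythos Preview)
Your minor-arc treatment is essentially the same as the paper's, but the major-arc step as you describe it does not close, and this is the decisive point.

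\medskip

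\textbf{The gap.} You propose to approximate $S_1(\alpha)$ on $\mathfrak M=\{\|\alpha\|\le X^{-c}(\log X)^A\}$ by partial summation from Linnik's asymptotic $\sum_{p\le t}r(p-1)=C\,t/\log t+O\bigl(t(\log\log t)^7(\log t)^{-1-\theta_0}\bigr)$. After inserting the $\log p$ weight and integrating by parts against $e(\alpha t^c)$, the error in $S_1(\alpha)$ picks up a factor $1+|\alpha|X^c$; multiplying this against $|S_2(\alpha)|^2\approx|I(\alpha)|^2$ and integrating over any arc of width $\tau\ge X^{-c}$ gives an error
\[
\asymp \frac{X^{3-c}(\log\log X)^7}{(\log X)^{\theta_0}}\,\log(\tau X^c),
\]
which already overshoots the target once $\tau$ exceeds $X^{-c}$ by more than a bounded factor. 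On the other hand, the pointwise minor-arc bound for $S_2$ (the analogue of the paper's Lemma~\ref{SalphaXest}) is only available for $|\alpha|\ge\Delta=X^{1/4-c}$; for $X^{-c}(\log X)^A\le|\alpha|\le\Delta$ one has merely $|S_2(\alpha)|\approx|I(\alpha)|\ll X/(\log X)^A$, and then your Cauchy--Schwarz splitting $\sup|S_2|\cdot\|S_1\|_2\|S_2\|_2$ is of size $X^2(\log X)^{O(1)-A}$, which is far larger than $X^{3-c}$. There is no choice of $\tau$ that makes both arcs work simultaneously: Linnik's theorem, used as a black box, is simply not strong enough to control $S_1$ on an arc wide enough for the exponential-sum machinery to kick in on the complement.

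\medskip

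\textbf{What the paper does instead.} The paper never invokes Linnik's asymptotic. It opens $r(p_1-1)=4\sum_{d\mid p_1-1}\chi_4(d)$ and splits according to the size of $d$. For $d\le D=X^{1/2}(\log X)^{-A}$ (and, by the symmetry $d\leftrightarrow (p_1-1)/d$, for $d\ge X/D$) the Linnik weight is replaced by a congruence condition $p_1\equiv 1\pmod d$, and the major arc $|\alpha|\le\Delta=X^{1/4-c}$ is then handled by a Bombieri--Vinogradov theorem twisted by $e(\alpha p^c)$ (Lemma~\ref{Bomb-Vin-Dim}); this is the substitute for the partial-summation step that fails in your scheme. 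The range $D<d<X/D$ is controlled in $L^2$ by Hooley's two lemmas (Lemmas~\ref{Hooley1} and~\ref{Hooley2}) together with an upper bound for the binary problem $[p_1^c]+[p_2^c]=N_0$ (Lemma~\ref{Thenumberofsolutions}); it is \emph{this} piece, not Linnik's error term, that produces the $(\log\log N)^5/(\log N)^{\theta_0}$ in the final statement.

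\medskip

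\textbf{Two smaller corrections.} First, the Vaaler/Buriev expansion of $e(-\alpha\{p^c\})$ introduces harmonics $e(hp^c)$, so the resulting phase is $(h+\alpha)p^c$, not $\alpha p^c+hp$; there is no additive linear twist to carry through the Type I/II estimates. Second, the threshold $16559/15276$ has nothing to do with $\theta_0$: it comes purely from the minor-arc inequality $X^{13993/15276+\varepsilon}\cdot X\ll X^{3-c}$, i.e.\ from the exponent-pair saving in $S_2$ (Lemmas~\ref{SIest}--\ref{SalphaXest}). The Linnik weight on $p_1$ plays no role on the minor arc beyond an $L^2$ bound of size $X^{1+\varepsilon}$.
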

In addition we have the following challenge for the future.
\begin{conjecture}
There exists $c_0>1$  such that  for any fixed $1<c<c_0$,
and every sufficiently large positive integer $N$, the diophantine equation
\begin{equation*}
[p^c_1]+[p^c_2]+[p^c_3]=N\,,
\end{equation*}
has a solution in prime numbers $p_1,\,p_2,\,p_3$, such that
$p_1=x_1^2 + y_1^2 +1$, $p_2=x_2^2 + y_2^2 +1$, $p_3=x_3^2 + y_3^2 +1$.
\end{conjecture}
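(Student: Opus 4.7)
The plan is to attack $\Gamma$ by the Hardy--Littlewood circle method with a single Linnik--weighted factor. Introduce the exponential sums
\begin{equation*}
S_1(\alpha)=\sum_{X/2<p\leq X}r(p-1)\log p\cdot e(\alpha[p^c]),\qquad S(\alpha)=\sum_{X/2<p\leq X}\log p\cdot e(\alpha[p^c]),
\end{equation*}
so that, by the orthogonality relation $\int_0^1 e(\alpha m)\,d\alpha=\mathbf{1}_{m=0}$ applied to $m=[p_1^c]+[p_2^c]+[p_3^c]-N$,
\begin{equation*}
\Gamma=\int_0^1 S_1(\alpha)\,S(\alpha)^2\, e(-\alpha N)\,d\alpha.
\end{equation*}
I would then partition the unit interval into major arcs $\mathfrak{M}$ (neighbourhoods of rationals $a/q$ with $1\leq a\leq q\leq Q$, $(a,q)=1$, for $Q$ a small power of $\log N$) and the complementary minor arcs $\mathfrak{m}$.

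On the major arcs I would first strip the fractional parts $\{p^c\}$ by a truncated Vaaler/Vinogradov Fourier approximation, reducing $S_1(a/q+\beta)$ to a product of a Ramanujan-type character sum in $q$ times the smoother sum $\sum_p r(p-1)\log p\,e(\beta p^c)$. Partial summation against Linnik's formula
\begin{equation*}
\sum_{p\leq y}r(p-1)=\pi\prod_{p>2}\Bigl(1+\frac{\chi_4(p)}{p(p-1)}\Bigr)\frac{y}{\log y}+\mathcal O\Bigl(\frac{y(\log\log y)^7}{(\log y)^{1+\theta_0}}\Bigr)
\end{equation*}
then produces the singular--series constant $\pi\prod_p(1+\chi_4(p)/(p(p-1)))$ and propagates an error of size $(\log\log N)^5/(\log N)^{\theta_0}$ once the $\log p$ weights in $\Gamma$ are absorbed. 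The two factors of $S(\alpha)$ are handled by Siegel--Walfisz, and the resulting singular integral, restricted to $u_j\in(1/2,1]$ via the change of variables $p_j=Xu_j$, evaluates to $\Gamma^3(1+1/c)\Gamma(3/c)^{-1}(1-2^{c-3})N^{3/c-1}$, matching the main term of \eqref{asymptoticformula1}.

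On the minor arcs I would apply Cauchy--Schwarz,
\begin{equation*}
\Bigl|\int_{\mathfrak m}S_1(\alpha)\,S(\alpha)^2\,e(-\alpha N)\,d\alpha\Bigr|\leq \Bigl(\sup_{\alpha\in\mathfrak m}|S(\alpha)|\Bigr)\Bigl(\int_0^1|S_1(\alpha)|^2\,d\alpha\Bigr)^{1/2}\Bigl(\int_0^1|S(\alpha)|^2\,d\alpha\Bigr)^{1/2}.
\end{equation*}
A standard spacing argument for the values of $[p^c]$ gives $\int_0^1|S(\alpha)|^2\,d\alpha\ll X^{2-c}\log X$, and upgrading this with the divisor--type bound $\sum_{p\leq X}r(p-1)^2\ll X(\log X)^2$ yields $\int_0^1|S_1(\alpha)|^2\,d\alpha\ll X^{2-c}(\log X)^3$. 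The supremum $\sup_{\mathfrak m}|S(\alpha)|\ll X^{1-\delta(c)}$ follows by Vaughan's identity together with exponent--pair estimates for $\sum_{n\sim X}e(\alpha n^c)$, in the spirit of Kumchev--Nedeva, Zhai--Cao, Cai, Zhang--Li and Baker.

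The hard part is precisely this minor-arc supremum. The extra $(\log X)^2$ loss in $\int|S_1|^2$ (compared to $\int|S|^2$) forces a stronger requirement on $\delta(c)$ than the ordinary three-primes Piatetski--Shapiro problem needs; optimising the Vaughan decomposition in this tighter regime against the target error $X^{3-c}/(\log X)^{\theta_0}$ is what cuts the admissible range from Baker's $3581/3106$ down to $16559/15276$. I expect this step --- carefully balancing Type I and Type II estimates for $S(\alpha)$ against the second--moment loss introduced by the weight $r(p-1)$ --- to be the most delicate calculation of the argument.
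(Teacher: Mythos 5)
Your proposal does not address the statement in question. The statement is the paper's \emph{Conjecture}, which asks for a representation $N=[p_1^c]+[p_2^c]+[p_3^c]$ with \emph{all three} primes of the form $x^2+y^2+1$; the paper explicitly leaves this open and offers no proof. What you have sketched is an attack on the paper's \emph{Theorem}, where only $p_1$ carries the Linnik condition: your generating function $\int_0^1 S_1(\alpha)S(\alpha)^2e(-\alpha N)\,d\alpha$ has a single $r(p-1)$-weighted factor, and the two unweighted copies of $S(\alpha)$ are essential to every step of your argument (the Siegel--Walfisz treatment of the major arcs, the Cauchy--Schwarz split on the minor arcs, and implicitly the counting of binary solutions $[p_2^c]+[p_3^c]=N-[p_1^c]$). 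For the Conjecture you would need to control $\int_0^1 S_1(\alpha)^3e(-\alpha N)\,d\alpha$, i.e.\ triple correlations of the weight $r(p-1)$, and none of the tools you invoke survive that change: Linnik's asymptotic formula handles one weighted variable, Hooley's lemmas (which in the paper supply the $(\log N)^{-\theta_0}$ saving for the middle divisor range of the single weighted prime) are applied after isolating the two free primes, and the second-moment bound $\int_0^1|S_1|^2\ll X^{2-c}(\log X)^3$ loses too much to close a three-fold weighted argument by Cauchy--Schwarz alone. This is precisely why the statement is a conjecture rather than a theorem.

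As a secondary remark, even read as a proof sketch of the Theorem your route differs from the paper's in a way worth noting: the paper does not do partial summation against Linnik's formula on major arcs. It opens $r(p_1-1)=4\sum_{d\mid p_1-1}\chi_4(d)$ and splits the divisors at $D=X^{1/2}(\log X)^{-A}$ and $X/D$, treating small $d$ by a Bombieri--Vinogradov theorem for the exponential sums $\overline{S}_{l,d;J}(t)$, the middle range by Cauchy--Schwarz plus Hooley's two lemmas together with an upper bound for the number of solutions of the binary equation, and the large range by the reflection $d\mapsto(p_1-1)/d$; the minor arcs are handled by Heath-Brown's decomposition (not Vaughan's identity) combined with specific exponent pairs and the mean value $\int_0^1|K(t)|^2\,dt\ll X\log^6X$. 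Your ``partial summation against Linnik'' step is also not a routine deduction: Linnik's theorem gives an average of $r(p-1)$ over $p\le y$, and converting that into an evaluation of $\sum_p r(p-1)\log p\,e(\beta p^c)$ with the oscillating factor present requires exactly the divisor-level decomposition the paper carries out. But all of this is moot for the statement you were asked to prove.
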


\section{Notations}
\indent

Let $N$ be a sufficiently large positive integer.
By $\varepsilon$ we denote an arbitrary small positive number, not the same in all appearances.
The letter $p$  with or without subscript will always denote prime number.
The notation $m\sim M$ means that $m$ runs through the interval $( M/2, M]$.
As usual $\varphi (n)$ is Euler's function and $\Lambda(n)$ is von Mangoldt's function.
We shall use the convention that a congruence, $m\equiv n\,\pmod {d}$ will be written as $m\equiv n\,(d)$.
Moreover $e(y)=e^{2\pi i y}$.
As usual $[t]$, $\{t\}$ and $\|t\|$ denote the integer part of $t$, the fractional part of $t$
and the distance from $t$ to the nearest integer, respectively.
We recall that $t=[t]+\{t\}$ and $\|t\|=\min(\{t\}_,1-\{t\})$.
We denote by $r(k)$ the number of solutions of the equation $k=x^2 + y^2$ in integers.
The symbol $\chi_4(k)$ will mean the non-principal character modulo 4.
Throughout this paper unless something else is said, we suppose that $1<c<\frac{16559}{15276}$.

Denote
\begin{align} \label{D}
&D=\frac{X^{\frac{1}{2}}}{(\log N)^{\frac{6A+34}{3}}}\,,\quad A>3\,;\\
\label{Delta}
&\Delta=X^{\frac{1}{4}-c}\,;\\
\label{H}
&H=X^{\frac{1283}{15276}}\,;\\
\label{SldalphaX}
&S_{l,d;J}(t)=\sum\limits_{p\in J\atop{p\equiv l\, (d)}} e(t [p^c])\log p\,;\\
\label{SalphaX}
&S(t)=S_{1,1;(X/2,X]}(t)\,;\\
\label{Sldoverline}
&\overline{S}_{l,d;J}(t)=\sum\limits_{p\in J\atop{p\equiv l\, (d)}} e(t p^c)\log p\,;\\
\label{Soverline}
&\overline{S}(t)=\overline{S}_{1,1;( X/2,X]}(t)\,;\\
\label{IJalphaX}
&I_J(t)=\int\limits_Je(t y^c)\,dy\,;\\
\label{IalphaX}
&I(t)=I_{(X/2,X]}(t)\,;\\
\label{Eytda}
&E(y,t,d,a)=\sum\limits_{\mu y<n\leq y\atop{n\equiv a\, (d)}}\Lambda(n)e(t n^c)
-\frac{1}{\varphi(d)}\int\limits_{\mu y}^{y}e(t x^c)\,dx\,,\\
&\mbox{ where } \quad 0<\mu<1\,.\nonumber
\end{align}

\section{Preliminary lemmas}
\indent

\begin{lemma}\label{Squareout}
For any complex numbers $a(l)$ we have
\begin{equation*}
\bigg|\sum_{L<l\le 2L}a(l)\bigg|^2
\leq\bigg(1+\frac{L}{Q}\bigg)\sum_{|q|\leq Q}\bigg(1-\frac{|q|}{Q}\bigg)
\sum_{L<l, \, l+q\leq 2L}a(l+q)\overline{a(l)},
\end{equation*}
where $Q\geq1$.
\end{lemma}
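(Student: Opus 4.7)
The plan is to use a standard shift-and-Cauchy-Schwarz argument (the so-called Sobolev--Gallagher trick). Fix a positive integer $Q$ and denote $S=\sum_{L<l\le 2L}a(l)$. The first step is to observe that $S$ is invariant under shifting the summation variable, so for every $h$ with $1\le h\le Q$ we have $S=\sum_{L<l\le 2L}a(l)=\sum_m a(m+h)\chi_h(m)$, where $\chi_h(m)$ is the indicator of the condition $L<m+h\le 2L$. Summing over $h$ gives
\begin{equation*}
QS=\sum_{m\in(L-Q,\,2L]}\sum_{h=1}^{Q}a(m+h)\chi_h(m),
\end{equation*}
and the outer range of $m$ has length at most $L+Q$.

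Next I would apply the Cauchy--Schwarz inequality in the variable $m$, which gives
\begin{equation*}
Q^{2}|S|^{2}\le (L+Q)\sum_{m}\bigg|\sum_{h=1}^{Q}a(m+h)\chi_h(m)\bigg|^{2}.
\end{equation*}
Expanding the modulus squared produces a double sum over $h_1,h_2\in\{1,\ldots,Q\}$ of $a(m+h_1)\overline{a(m+h_2)}\chi_{h_1}(m)\chi_{h_2}(m)$, after which I interchange the order of summation and carry out the sum over $m$ inside.

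The third step is to reindex the double sum by the difference $q=h_1-h_2$. Setting $l=m+h_2$ converts the inner summand into $a(l+q)\overline{a(l)}$ with both $l$ and $l+q$ constrained to $(L,2L]$, while the number of pairs $(h_1,h_2)\in\{1,\ldots,Q\}^{2}$ with $h_1-h_2=q$ is exactly $Q-|q|$ for $|q|<Q$. This yields
\begin{equation*}
Q^{2}|S|^{2}\le (L+Q)\sum_{|q|<Q}(Q-|q|)\sum_{\substack{L<l,\,l+q\le 2L}}a(l+q)\overline{a(l)},
\end{equation*}
and dividing through by $Q^{2}$ produces exactly the inequality stated in the lemma (with $|q|\le Q$ permissible since the weight $1-|q|/Q$ kills the extra term).

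There is no serious obstacle: the whole argument is elementary, and the only points that require a moment of care are the bookkeeping of the range of $m$ (to get the factor $L+Q$ rather than something larger) and the combinatorial identity for the number of pairs with a prescribed difference, which is what produces the Fejér-type weight $1-|q|/Q$.
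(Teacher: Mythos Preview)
Your argument is correct; this is exactly the standard Weyl--van der Corput/Cauchy--Schwarz shifting proof, and the bookkeeping you describe (the $L+Q$ count for the range of $m$, and the $Q-|q|$ count for pairs $(h_1,h_2)$ with fixed difference) is handled accurately. The paper itself does not prove the lemma but simply cites Heath-Brown (\cite{Heath}, Lemma~5), whose proof is essentially what you have written.
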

\begin{proof}
See (\cite{Heath}, Lemma 5).
\end{proof}

\begin{lemma}\label{Exponentpairs}
Let $|f^{(m)}(u)|\asymp YX^{1-m}$  for $1\leq X<u<X_0\leq2X$ and $m\geq1$.\\
Then
\begin{equation*}
\bigg|\sum_{X<n\le X_0}e(f(n))\bigg|
\ll Y^\varkappa X^\lambda +Y^{-1},
\end{equation*}
where $(\varkappa, \lambda)$ is any exponent pair.
\end{lemma}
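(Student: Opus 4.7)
\textbf{Proof proposal for Lemma \ref{Exponentpairs}.} This is the standard exponent-pair bound for exponential sums, so the plan is to reproduce its classical derivation rather than invent a new argument. I would organize the proof around the two cases $m=1$ and $m\geq 2$.

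For $m=1$ the hypothesis reduces to $|f'(u)|\asymp Y$ on $(X,X_0]$. If $Y$ is not too small, say $Y\geq X^{-1}$, then by the very definition of an exponent pair $(\varkappa,\lambda)$, applied to the normalized phase $f$ on an interval of length $\asymp X$, one obtains
\[
\bigg|\sum_{X<n\leq X_0}e(f(n))\bigg|\ll Y^{\varkappa}X^{\lambda}.
\]
In the complementary regime, where the total variation of $f$ over $(X,X_0]$ is less than $1$, one invokes the Kusmin-Landau inequality, which yields a bound of order $\|f'\|^{-1}\ll Y^{-1}$; this is exactly the source of the extra $+Y^{-1}$ term in the statement.

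For general $m\geq 2$ I would proceed by induction on $m$, using the van der Corput $A$-process (Weyl differencing) to reduce the $m$-th derivative condition to an $(m-1)$-th derivative condition. Concretely, applying Lemma \ref{Squareout} with a suitable $Q$ and squaring out, the inner shifted sums involve the phase $f(n+q)-f(n)$, whose $(m-1)$-th derivative satisfies $|(f(n+q)-f(n))^{(m-1)}|\asymp qYX^{1-m}\cdot X = qYX^{2-m}$ on the appropriate scale. The induction hypothesis then bounds each shifted sum, and after optimizing $Q$ one arrives at the same form $Y^{\varkappa'}X^{\lambda'}+Y^{-1}$, where the new pair $(\varkappa',\lambda')$ is the image of $(\varkappa,\lambda)$ under the $A$-transformation $(\varkappa,\lambda)\mapsto (\varkappa/(2\varkappa+2),(\varkappa+\lambda+1)/(2\varkappa+2))$. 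Iterating $m-1$ times leaves exactly the statement of the lemma.

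The main technical point, and the only place where care is required, is the bookkeeping of parameters through the repeated $A$-process: one has to verify that the size of $Y$ and $X$ transform consistently at each stage and that the $Y^{-1}$ error term is preserved rather than inflated. Since the result is completely standard and appears in Graham and Kolesnik's monograph on van der Corput's method, I expect no genuine obstacle, and in a formal write-up I would simply cite that reference rather than perform the induction in detail.
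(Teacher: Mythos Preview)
The paper's own proof is a one-line citation to Graham--Kolesnik, Chapter~3, which is exactly what you propose in your final paragraph; in that sense your bottom line matches the paper.

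Your sketch, however, misreads the hypothesis. The condition $|f^{(m)}(u)|\asymp Y X^{1-m}$ is assumed to hold for \emph{all} $m\geq 1$ simultaneously --- this is precisely the ``model function'' hypothesis under which exponent pairs are \emph{defined} --- not for a single fixed $m$ on which one would induct. Once the full derivative hypothesis is in force, the bound $Y^\varkappa X^\lambda$ is essentially the definition of $(\varkappa,\lambda)$ being an exponent pair (in the regime where $Y$ is bounded below by a constant), and the extra $Y^{-1}$ comes from Kusmin--Landau when $Y$ is small. There is no induction on $m$ to carry out: the $A$-process you describe is the mechanism by which one \emph{constructs} new exponent pairs from old ones, not a step in applying a pair that is already given. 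In particular, your claim that ``iterating $m-1$ times leaves exactly the statement of the lemma'' is not right --- iterating the $A$-process changes $(\varkappa,\lambda)$ rather than returning to it. So while your proposed citation is fine, the surrounding outline does not constitute a correct proof sketch.
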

\begin{proof}
See (\cite{Graham-Kolesnik}, Ch. 3).
\end{proof}

\begin{lemma}\label{Buriev} Let $x,y\in\mathbb{R}$ and $H\geq3$.
Then the formula
\begin{equation*}
e(-x\{y\})=\sum\limits_{|h|\leq H}c_h(x)e(hy)+\mathcal{O}\left(\min\left(1, \frac{1}{H\|y\|}\right)\right)
\end{equation*}
holds. Here
\begin{equation*}
c_h(x)=\frac{1-e(-x)}{2\pi i(h+x)}\,.
\end{equation*}
\end{lemma}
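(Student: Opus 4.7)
The plan is to recognize the identity as the truncated Fourier expansion of the $1$-periodic function $f(y)=e(-x\{y\})$. First I would compute the $h$-th Fourier coefficient of $f$: since $\{u\}=u$ on $[0,1)$,
$$\int_0^1 e(-xu)\,e(-hu)\,du=\frac{1-e(-(x+h))}{2\pi i(x+h)}=\frac{1-e(-x)}{2\pi i(h+x)}=c_h(x),$$
using $e(-h)=1$ for $h\in\mathbb{Z}$. This identifies $c_h(x)$ with the Fourier coefficients of $f$ and formally gives the expansion.

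The substance of the lemma is the truncation estimate. By $1$-periodicity in $y$ I may assume $0<y<1$, so that $\{y\}=y$. Writing the partial sum via the Dirichlet kernel $D_H(u)=\sum_{|h|\le H}e(hu)=\sin((2H+1)\pi u)/\sin(\pi u)$ gives
$$\sum_{|h|\le H}c_h(x)\,e(hy)=\int_0^1 e(-xu)\,D_H(y-u)\,du,$$
so the problem reduces to the classical estimate for the Fourier partial sums of a function with a single jump discontinuity (here at $u\equiv 0\pmod 1$). Using the standard bound $|D_H(u)|\ll\min(H,\|u\|^{-1})$ together with $\int_0^1 D_H=1$, I would split the integration interval into a neighbourhood $|u-y|\le 1/H$ of $y$ and its complement. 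On the near region, the smoothness of $e(-xu)$ combined with $\int D_H=1$ yields an $O(1)$ contribution; on the far region, either direct estimation using $|D_H(y-u)|\ll 1/\|y-u\|$ or Abel summation on the tail $\sum_{|h|>H}c_h(x)e(hy)$—exploiting $|c_h(x)|\ll 1/|h|$ together with $\bigl|\sum_{h\le N}e(hy)\bigr|\ll 1/\|y\|$—produces the bound $O(1/(H\|y\|))$.

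The main obstacle is the jump of $f$ at the endpoints of $[0,1)$, which is what prevents a uniformly decaying $O(1/H)$ estimate: near $y\equiv 0 \pmod 1$ one only recovers the crude $O(1)$ bound. Taking the pointwise minimum of the two regimes produces the claimed error term $\min(1,1/(H\|y\|))$. The estimate is uniform in $x\in\mathbb{R}$ since the $x$-dependence of $c_h(x)$ enters only through the bounded factor $1-e(-x)$, so no subtlety is introduced there.
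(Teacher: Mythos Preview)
The paper does not actually prove this lemma; its proof consists solely of the citation ``See (\cite{Buriev}, Lemma 12)''. Your sketch is a correct and standard route to the result: identifying $c_h(x)$ as the $h$-th Fourier coefficient of the $1$-periodic function $y\mapsto e(-x\{y\})$, and then bounding the tail $\sum_{|h|>H}c_h(x)e(hy)$ by Abel summation using $|c_{h+1}(x)-c_h(x)|\ll |h+x|^{-2}$ together with the geometric-series bound $\bigl|\sum_{H<h\le N}e(hy)\bigr|\ll\|y\|^{-1}$, which gives the $O\bigl(1/(H\|y\|)\bigr)$ part; the $O(1)$ part follows from the uniform boundedness of partial Fourier sums of a function of bounded variation (equivalently from the Dirichlet-kernel representation you wrote down).

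One small imprecision: your last sentence says the $x$-dependence enters only through the bounded numerator $1-e(-x)$, but the denominator $h+x$ also depends on $x$. This does not cause trouble---for general $x$ at most one integer $h$ has $|h+x|<1/2$, and for that $h$ the numerator $|1-e(-x)|\asymp\|x\|$ compensates; for all other $h$ one has $|h+x|\asymp|h-(-x)|\ge 1/2$. In any case the only application in the paper has $|x|=|t|<1$, where the issue does not arise.
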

\begin{proof}
See (\cite{Buriev}, Lemma 12).
\end{proof}

\begin{lemma}\label{Heath-Brown} Let $3 < U < V < Z < X$ and suppose that $Z -\frac{1}{2}\in\mathbb{N}$,
$X\gg Z^2U$, $Z \gg U^2$, $V^3\gg X$.
Assume further that $F(n)$ is a complex valued function such that $|F(n)| \leq 1$.
Then the sum
\begin{equation*}
\sum\limits_{n\sim X}\Lambda(n)F(n)
\end{equation*}
can be decomposed into $O\Big(\log^{10}X\Big)$ sums, each of which is either of Type I
\begin{equation*}
\sum\limits_{m\sim M}a(m)\sum\limits_{l\sim L}F(ml)\,,
\end{equation*}
where
\begin{equation*}
L \gg Z\,, \quad  LM\asymp X\,, \quad |a(m)|\ll m^\varepsilon\,,
\end{equation*}
or of Type II
\begin{equation*}
\sum\limits_{m\sim M}a(m)\sum\limits_{l\sim L}b(l)F(ml)\,,
\end{equation*}
where
\begin{equation*}
U \ll L \ll V\,, \quad  LM\asymp X\,, \quad |a(m)|\ll m^\varepsilon\,,\quad |b(l)|\ll l^\varepsilon\,.
\end{equation*}
\end{lemma}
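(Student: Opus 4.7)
The plan is to deduce this decomposition from the Heath-Brown identity by dyadic splitting followed by a combinatorial case analysis. With $k=3$ the identity states that for every $n \leq 2Z^3$,
\begin{equation*}
\Lambda(n) = \sum_{j=1}^{3} (-1)^{j-1} \binom{3}{j} \sum_{\substack{n = n_1 \cdots n_{2j} \\ n_1, \ldots, n_j \leq Z}} \mu(n_1) \cdots \mu(n_j) \log n_{2j}.
\end{equation*}
The hypothesis $V^3 \gg X$ combined with $V < Z$ gives $Z^3 \gg X$, so the identity applies for every $n\sim X$; substituting it into $\sum_{n\sim X}\Lambda(n)F(n)$ produces three multi-dimensional sums in $2$, $4$, and $6$ variables, with coefficients supported on $\mu$-factors and a single $\log n_{2j}$.

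Next I would dyadically decompose each variable into a range $(N_i/2, N_i]$ with $N_i$ a power of $2$, strip the $\log n_{2j}$ weight via Abel summation (at the cost of a factor $\log X$), and use the half-integrality $Z-\tfrac{1}{2}\in\mathbb{N}$ to place the cutoff $n_i\leq Z$ exactly at a dyadic breakpoint. These steps reduce the original sum to $O(\log^{10}X)$ subsums of the form
\begin{equation*}
\sum_{n_1 \sim N_1} \cdots \sum_{n_{2j} \sim N_{2j}} a_1(n_1) \cdots a_{2j}(n_{2j}) F(n_1 \cdots n_{2j}), \qquad N_1 \cdots N_{2j} \asymp X,
\end{equation*}
with each $|a_i|\leq 1$ and with $\mu$-variables constrained to $N_i\leq Z$.

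For each such subsum I would merge the $N_i$ into two blocks to match the Type~I or Type~II shape. If some unrestricted variable has $N_i\geq Z$, take it as the inner variable $l$ and fold the rest into a divisor-bounded coefficient $a(m)\ll m^{\varepsilon}$; this yields a Type~I sum with $L\gg Z$. Otherwise every $N_i<Z$, and I would order the $N_i$ and set $L = N_1\cdots N_{k^{*}}$, where $k^{*}$ is the smallest index with $N_1\cdots N_{k^{*}}\geq U$. Then $L\geq U$ by construction and $L < U\cdot N_{k^{*}} < UZ$; the inequalities $Z\gg U^{2}$, $X\gg Z^{2}U$ and $V^{3}\gg X$ then conspire to force $L\ll V$ in every admissible configuration, producing a Type~II sum after absorbing the remaining variables into $b(l)$ and $a(m)$.

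The hard part is this final balancing: verifying, across all configurations compatible with the three cases $j=1,2,3$, that the greedy choice of $L$ always lands in the Type~II window $[U,V]$ rather than overshooting into $[V,Z]$. The interlocking hypotheses on $U$, $V$, $Z$, $X$ are tailored precisely to exclude the dangerous configurations in which too much mass concentrates near $Z$ (for instance a single $\mu$-variable near $Z$ paired with two small free variables would escape the window without the bound $X\gg Z^{2}U$). Treating each case is routine but laborious, and the factor $\log^{10}X$ emerges from the dyadic splits of six variables, Abel summation, and the case distinctions.
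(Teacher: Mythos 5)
You should first be aware that the paper contains no proof of this lemma: it is quoted verbatim from Heath-Brown's paper on the Piatetski-Shapiro prime number theorem (Lemma 3 of \cite{Heath}), and the ``proof'' in the text is only that citation. Your overall route --- Heath-Brown's identity with $k=3$, dyadic decomposition of each variable, removal of the $\log$-weight by partial summation, and a combinatorial regrouping of the dyadic blocks into the two factors $m$ and $l$ --- is exactly the route of the cited proof, so there is no divergence of method to report.

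There is, however, a genuine gap in the one step you do write out, namely the combinatorial endgame. In the case where no free block reaches $Z$, you take $k^{*}$ minimal with $N_1\cdots N_{k^{*}}\ge U$ and deduce $L=N_1\cdots N_{k^{*}}<U N_{k^{*}}<UZ$, and then assert that the hypotheses ``force $L\ll V$.'' They do not: the bound $L<UZ$ is far too weak, since $UZ$ can greatly exceed $V$ under the stated hypotheses --- indeed, for the parameters used later in this very paper, $U=X^{1/9}$, $V=X^{1/3}$, $Z=\big[X^{4/9}\big]+\tfrac{1}{2}$, one has $UZ\asymp X^{5/9}$ while $V=X^{1/3}$. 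The greedy product overshoots the window $[U,V]$ exactly when the block $N_{k^{*}}$ it appends exceeds $V/U$, which the hypotheses permit ($N_{k^{*}}$ may be anywhere up to $Z$). The actual argument must dispose of the dangerous blocks before any greedy accumulation: if some block already lies in $[U,V]$, it alone serves as $l$; if some block lies in $(V,Z)$, one uses $V^{3}\gg X$ to limit how many such blocks there can be and $X\gg Z^{2}U$ to show that either such a block is forced to be $\asymp V$ (acceptable, since $L\ll V$ tolerates constants) or a complementary subproduct can be placed in the window; only after these cases are excluded does the greedy step land safely. That case analysis is the entire content of the lemma --- the hypotheses $X\gg Z^{2}U$, $Z\gg U^{2}$, $V^{3}\gg X$ are calibrated precisely to make it close --- and your proposal replaces it with an assertion supported by a bound that does not imply the conclusion. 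Until that step is supplied, the proof is incomplete.
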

\begin{proof}
See (\cite{Heath}, Lemma 3).
\end{proof}

\begin{lemma}\label{SIasympt} Let $1<c<3$, $c\neq2$ and $|t|\leq\Delta$.
Then the asymptotic formula
\begin{equation*}
\sum\limits_{ X/2<p\leq X} e(t p^c)\log p=\int\limits_{X/2}^{X}e(t y^c)\,dy
+\mathcal{O}\left(\frac{X}{e^{(\log X)^{\frac{1}{5}}}}\right)
\end{equation*}
holds.
\end{lemma}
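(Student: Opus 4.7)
The plan is to reduce the prime sum to a von Mangoldt sum, partition $(X/2,X]$ into short intervals on which $e(ty^c)$ is essentially constant, apply the prime number theorem in short intervals on each piece, and compare the resulting Riemann sum with the integral. First I would write
\[
\sum_{X/2<p\leq X}e(tp^c)\log p=\sum_{X/2<n\leq X}\Lambda(n)e(tn^c)+O\bigl(X^{1/2}\log X\bigr),
\]
the discrepancy accounting only for prime-power contributions $p^k$, $k\geq 2$, which are plainly negligible.

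Next I would set $L:=X^{3/4}e^{-(\log X)^{1/5}}$ and partition $(X/2,X]$ into $K\asymp X/L$ consecutive subintervals $I_k=(X_k,X_{k+1}]$ of length at most $L$. For $n\in I_k$, the mean value theorem combined with the hypothesis $|t|\leq\Delta=X^{1/4-c}$ yields
\[
|e(tn^c)-e(tX_k^c)|\ll |t|\,c\,X^{c-1}L\ll X^{1/4-c}\cdot X^{c-1}\cdot X^{3/4}e^{-(\log X)^{1/5}}=e^{-(\log X)^{1/5}},
\]
and consequently
\[
\sum_{n\in I_k}\Lambda(n)e(tn^c)=e(tX_k^c)\sum_{X_k<n\leq X_{k+1}}\Lambda(n)+O\bigl(Le^{-(\log X)^{1/5}}\bigr).
\]

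Since $L\gg X^{7/12+\varepsilon}$, I would now invoke the Korobov--Vinogradov strength prime number theorem in short intervals (Huxley/Heath--Brown), namely $\sum_{X_k<n\leq X_{k+1}}\Lambda(n)=L+O(Le^{-(\log X)^{1/5}})$. Summing over $k=0,\ldots,K-1$ produces
\[
\sum_{X/2<n\leq X}\Lambda(n)e(tn^c)=\sum_{k=0}^{K-1}e(tX_k^c)L+O\bigl(Xe^{-(\log X)^{1/5}}\bigr),
\]
and a routine Riemann-sum comparison
\[
\Bigl|\sum_{k=0}^{K-1}e(tX_k^c)L-\int_{X/2}^{X}e(ty^c)\,dy\Bigr|\ll KL^{2}|t|X^{c-1}=XL|t|X^{c-1}\ll Xe^{-(\log X)^{1/5}}
\]
then closes the argument.

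The main obstacle is locating a short-interval PNT with error $O(Le^{-(\log X)^{1/5}})$ valid for $L\gg X^{7/12+\varepsilon}$; this is the standard but non-trivial consequence of the Korobov--Vinogradov zero-free region combined with Huxley's short-interval technique. The choice of $L$ is essentially forced by balancing the three error contributions---the pointwise approximation of $e(tn^c)$ on $I_k$, the Riemann-sum error, and the short-interval PNT error---which all come out to $Xe^{-(\log X)^{1/5}}$ for this value of $L$.
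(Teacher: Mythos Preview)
Your argument is correct and is precisely the method behind Tolev's Lemma~14, which the paper simply cites without reproducing a proof: split $(X/2,X]$ into short intervals of length $L\asymp X^{3/4}e^{-(\log X)^{1/5}}$, freeze $e(ty^{c})$ on each piece, apply the prime number theorem in short intervals, and compare with the integral. The one input you flag as the ``main obstacle'' is indeed available: combining the Korobov--Vinogradov zero-free region with Huxley's zero-density estimate yields $\psi(x+h)-\psi(x)=h+O\bigl(h\exp(-c(\log x)^{1/3})\bigr)$ for $h\ge x^{7/12+\varepsilon}$, which comfortably delivers the saving $e^{-(\log X)^{1/5}}$ and absorbs any stray logarithmic factors coming from the crude bound $\sum_{n\in I_k}\Lambda(n)\ll L\log X$ in your approximation step.
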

\begin{proof}
See (\cite{Tolev}, Lemma 14).
\end{proof}

\begin{lemma}\label{Bomb-Vin-Dim} Let $1<c<3$, $c\neq2$, $|t|\leq\Delta$ and $A>0$ be fixed.
Then the inequality
\begin{equation*}
\sum\limits_{d\le \sqrt{X}/(\log N)^{\frac{6A+34}{3}}}\max\limits_{y\le X}
\max\limits_{(a,\, d)=1}\big|E(y,t,d,a)\big|\ll\frac{X}{\log^AX}
\end{equation*}
holds. Here $\Delta$ and $E(y,t,d,a)$ are denoted by \eqref{Delta} and \eqref{Eytda}.
\end{lemma}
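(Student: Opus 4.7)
The target is a Bombieri--Vinogradov theorem for the von Mangoldt function twisted by the smooth weight $e(tn^c)$. Because $|t|\le\Delta=X^{1/4-c}$, the derivative $2\pi tcn^{c-1}$ satisfies $|2\pi tcn^{c-1}|\ll X^{-3/4}$, so the weight is very slowly varying and does not destroy the classical proof. However, naive partial summation costs a factor of $\int|\phi'|\asymp|t|X^c\asymp X^{1/4}$ on top of the BV bound and is therefore too weak; the weight must be carried through the argument.

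\emph{Step 1 (character decomposition).} I would detect $n\equiv a\,(d)$ by Dirichlet characters, writing
\[
\sum_{\substack{\mu y<n\le y\\ n\equiv a(d)}}\Lambda(n)e(tn^c)=\frac{1}{\varphi(d)}\sum_{\chi\,(d)}\overline{\chi}(a)\,T(y,t,\chi),\qquad T(y,t,\chi)=\!\!\sum_{\mu y<n\le y}\!\!\Lambda(n)\chi(n)e(tn^c).
\]
The principal character contribution, after subtracting the integral in \eqref{Eytda}, is $O\bigl(X/(\varphi(d)\exp((\log X)^{1/5}))\bigr)$ by Lemma~\ref{SIasympt} applied to the intervals $(\mu y,y]$, and is negligible after summation over $d\le D$.

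\emph{Step 2 (Heath--Brown decomposition).} For non-principal $\chi$ pass to the inducing primitive $\chi^{*}\pmod{d^{*}}$ and apply Lemma~\ref{Heath-Brown} to $T(y,t,\chi^{*})$ with $F(n)=\chi^{*}(n)e(tn^c)$, choosing parameters such as $U=X^{1/5}$, $V=X^{2/5}$, $Z=X^{2/5-\varepsilon}$ (which satisfy $Z\gg U^{2}$, $Z^{2}U\ll X$, $V^{3}\gg X$). This yields $O(\log^{10}X)$ bilinear pieces.

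\emph{Step 3 (bilinear estimates).} For a Type~I piece $\sum_{m\sim M}a(m)\sum_{l\sim L}\chi^{*}(l)e(t(ml)^c)$ with $L\gg Z$, complete the character modulo $d^{*}$ and apply Lemma~\ref{Exponentpairs} to $\sum_{l\sim L}e(tm^{c}l^{c}+kl/d^{*})$; the relevant size $Y=|t|m^{c}L^{c-1}\ll X^{1/4}L^{-1}$ is tiny, so a standard exponent pair such as $(\tfrac{1}{2},\tfrac{1}{2})$ already saves a power of $L$. For a Type~II piece, Cauchy--Schwarz in $m$, followed by opening the square, separates diagonal and off-diagonal. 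The off-diagonal sum $\sum_{m\sim M}e(tm^{c}(l_{1}^{c}-l_{2}^{c}))$ is again estimated by Lemma~\ref{Exponentpairs}, while the diagonal is handled, after summing on primitive characters and moduli, by the multiplicative large-sieve inequality
\[
\sum_{d\le D}\frac{d}{\varphi(d)}\sum^{*}_{\chi\,(d)}\Bigl|\sum_{n\sim N}a(n)\chi(n)\Bigr|^{2}\ll (N+D^{2})\|a\|_{2}^{2}.
\]
With $D=X^{1/2}/(\log X)^{A+5}$ so that $D^{2}\ll X/(\log X)^{2A+10}$, assembling the two bounds and dividing by $\varphi(d)$ produces the claimed estimate $\ll X/(\log X)^{A}$, uniformly in $y\le X$ and $|t|\le\Delta$.

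The main obstacle will be the familiar Bombieri--Vinogradov balancing: tuning $U,V,Z$ so that the Type~I exponent-pair savings and the Type~II large-sieve factor $N+D^{2}$ both beat arbitrary powers of $\log X$. This succeeds because the weight $e(tn^c)$ is essentially flat at the scale of $\Delta$: its $k$-th derivative is of size $|t|X^{c-k}\le X^{1/4-k}$, so Lemma~\ref{Exponentpairs} loses nothing relative to the unweighted case, and the rest of the argument is a straightforward transcription of the classical proof.
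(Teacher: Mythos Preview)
The paper does not prove this lemma here; it refers to \cite{Dimitrov4}, Lemma~18. Your overall framework---characters, Heath--Brown identity, then Type~I/II estimates backed by the large sieve---is the natural one and almost certainly matches the cited proof in outline. The gap is in Step~3, and it is exactly the obstacle you flagged at the outset.

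For Type~I, after completing $\chi^{*}$ you face $\sum_{l\sim L}e\bigl(tm^{c}l^{c}+kl/d^{*}\bigr)$ and invoke Lemma~\ref{Exponentpairs}. But that lemma requires $|f^{(j)}|\asymp YL^{1-j}$ for \emph{all} $j\ge1$; the linear term $kl/d^{*}$ makes $f'$ of size up to $1$ while the smooth part contributes only $\ll X^{1/4}L^{-1}\ll1$, so the $j=1$ hypothesis fails. Even the bare second-derivative bound, summed trivially over $\varphi(d^{*})$ characters and $d^{*}\le D$, is far larger than $X$. For Type~II, your scheme ``Cauchy in $m$, then large sieve on the $l$-sum with coefficients $b(l)e(t(ml)^{c})$'' applies the large sieve to only \emph{one} variable. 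It yields $\sum_{q}\tfrac{q}{\varphi(q)}\sum_{\chi^{*}}|T_{\chi}|^{2}\ll M^{2}(L+D^{2})\|b\|_{2}^{2}$, hence $\sum_{q}\varphi(q)^{-1}\sum_{\chi^{*}}|T_{\chi}|\ll X^{\varepsilon}(ML+ML^{1/2}D)$, and the term $ML^{1/2}D\asymp X^{3/2}L^{-1/2}$ exceeds $X$ throughout the range $L\ll V$. The unweighted Bombieri--Vinogradov theorem reaches level $X^{1/2}$ only because $T_{\chi}=A(\chi)B(\chi)$ factors and Cauchy in the $(\chi,q)$-sum lets one large-sieve both factors; the weight $e\bigl(t(ml)^{c}\bigr)$ destroys this factorisation, and your sketch does not say how to recover it. Your off-diagonal exponent-pair step does not help either: the phase $tm^{c}(l_{1}^{c}-l_{2}^{c})$ has derivative $\ll|t|X^{c-1}L\ll X^{-3/4}V\ll1$, so the $m$-sum exhibits no cancellation. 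The proof in \cite{Dimitrov4} supplies the missing device---either a careful separation of the weight compatible with the two-sided large sieve, or (in the spirit of Tolev's proof of Lemma~\ref{SIasympt}) an explicit-formula/zero-density argument for $L(s,\chi)$ that bypasses the bilinear factorisation entirely---and that is the idea absent from your outline.
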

\begin{proof}
See (\cite{Dimitrov4}, Lemma 18).
\end{proof}

\begin{lemma}\label{intLintI}
For the sum denoted by \eqref{SalphaX} and the integral denoted by \eqref{IalphaX} we have
\begin{align*}
&\emph{(i)}\quad\quad\quad\;\,
\int\limits_{-\Delta}^\Delta|S(t)|^2\,dt\,\ll X^{2-c}\log^2X\,,
\quad\quad\quad\quad\quad\quad\quad\\
&\emph{(ii)}\quad\quad\quad\int\limits_{-\Delta}^\Delta|I(t)|^2\,dt\ll X^{2-c}\log X\,,\\
\quad\quad\quad\quad\quad\quad\quad
&\emph{(iii)}\quad\quad\;\,
\int\limits_{0}^{1}|S(t)|^2\,dt\ll X\log X\,.
\quad\quad\quad\quad\quad\quad\quad
\end{align*}
\end{lemma}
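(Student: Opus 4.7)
The three parts share a common structure: all reduce to an orthogonality relation combined either with a size bound on the floor differences $[p_1^c]-[p_2^c]$, or with the first-derivative test for the oscillatory integral $I(t)$.

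Part~(iii) follows from Parseval on $[0,1]$. Expanding $|S(t)|^2$ and integrating, every cross term with $[p_1^c]\neq[p_2^c]$ drops out. For $X$ large and $c>1$, distinct primes $p_1,p_2\in(X/2,X]$ satisfy $|p_1-p_2|\geq 2$, hence $|p_1^c-p_2^c|\geq 2c(X/2)^{c-1}>1$, so the integer parts are distinct too. Only the diagonal survives and Chebyshev's bound gives $\sum_{X/2<p\leq X}\log^2 p\ll X\log X$, which is (iii).

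Part~(ii) is settled by the first-derivative test. The phase $ty^c$ has monotone derivative $tcy^{c-1}$ of size $|t|X^{c-1}$ on $(X/2,X]$, so $|I(t)|\ll 1/(|t|X^{c-1})$; combined with the trivial bound $|I(t)|\leq X/2$ this yields $|I(t)|\ll\min\bigl(X,\;|t|^{-1}X^{1-c}\bigr)$. Splitting the integral at $|t|=X^{-c}$ and integrating both regions produces $\int_{-\Delta}^{\Delta}|I(t)|^2\,dt\ll X^{2-c}$, a touch stronger than claimed.

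Part~(i) is the main point. I proceed by the same Parseval-type expansion, but the kernel $\int_{-\Delta}^{\Delta}e(tn)\,dt$ now equals $2\Delta$ only when $n=0$ and is $\ll 1/|n|$ otherwise. The diagonal contributes $\ll \Delta X\log X=X^{5/4-c}\log X$. For $p_1\neq p_2$ I would group pairs by the gap $k=|p_1-p_2|\ge2$; the mean-value theorem gives $|p_1^c-p_2^c|\asymp kX^{c-1}$, and since $kX^{c-1}\gg1$ the $O(1)$ error from dropping the floors is absorbed, so $|[p_1^c]-[p_2^c]|\asymp kX^{c-1}$ as well. There are at most $O(X/\log X)$ primes supporting each gap, so the off-diagonal total is
\begin{equation*}
\ll (\log^2 X)\,X^{1-c}\sum_{2\leq k\leq X}\frac{X/\log X}{k}\ll X^{2-c}\log^2 X,
\end{equation*}
which dominates the diagonal and yields (i). The only mildly delicate step is verifying the uniform size estimate $|[p_1^c]-[p_2^c]|\asymp kX^{c-1}$; once that is checked via the mean-value theorem, the rest is routine summation.
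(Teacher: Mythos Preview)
Your argument is correct. The paper itself merely cites Tolev (1992, Lemma~7) without details, and the method you supply---Parseval on $[0,1]$ for (iii), the first-derivative bound $I(t)\ll\min(X,|t|^{-1}X^{1-c})$ for (ii), and squaring out plus the mean-value estimate $|[p_1^c]-[p_2^c]|\asymp|p_1-p_2|X^{c-1}$ for (i)---is exactly the standard argument underlying that citation.
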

\begin{proof}
It follows from the arguments used in (\cite{Tolev}, Lemma 7).
\end{proof}

\begin{lemma}\label{intSld}
For the sum denoted by \eqref{SldalphaX} we have
\begin{equation*}
\int\limits_{-\Delta}^\Delta|S_{l,d;J}(t)|^2\,dt\ll\frac{X^{2-c}\log^3X}{d^2}\,.
\end{equation*}
\end{lemma}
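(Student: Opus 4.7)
\medskip
\noindent\textbf{Proof proposal.}
The plan is to open the square in the usual way and evaluate the $t$-integral, then exploit the fact that for primes $p_1\ne p_2$ in $(X/2,X]$ the integer parts $[p_1^c]$ and $[p_2^c]$ are well separated once $c>1$. Writing
\begin{equation*}
\int_{-\Delta}^{\Delta}|S_{l,d;J}(t)|^{2}\,dt=\sum_{\substack{p_1,p_2\in J\\ p_1,p_2\equiv l\,(d)}}\log p_1\log p_2\int_{-\Delta}^{\Delta}e\bigl(t([p_1^c]-[p_2^c])\bigr)\,dt,
\end{equation*}
the inner integral is bounded by $\min\!\bigl(2\Delta,\,1/(\pi|[p_1^c]-[p_2^c]|)\bigr)$. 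Since $c>1$, for $X$ large the spacing of the values $y\mapsto y^c$ on $(X/2,X]$ satisfies $(n+1)^{1/c}-n^{1/c}\asymp X^{1-c}<1$, so distinct primes $p_1\ne p_2$ give distinct integer parts. Hence the "diagonal" reduces to $p_1=p_2$, and for $p_1\ne p_2$ we have the clean estimate
\begin{equation*}
\bigl|[p_1^c]-[p_2^c]\bigr|\asymp |p_1^c-p_2^c|\asymp X^{c-1}|p_1-p_2|,
\end{equation*}
which is the key quantitative input.

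The diagonal $p_1=p_2$ contributes at most $2\Delta\sum_{p\equiv l\,(d),\,p\in J}(\log p)^{2}\ll \Delta(\log X)^{2}\cdot X/d\ll X^{5/4-c}(\log X)^{2}/d$, which in the relevant range $d\le X^{3/4}$ is absorbed into the target bound. For the off-diagonal, the above spacing estimate yields
\begin{equation*}
\sum_{\substack{p_1,p_2\in J,\,p_1\ne p_2\\ p_1,p_2\equiv l\,(d)}}\frac{\log p_1\log p_2}{|[p_1^c]-[p_2^c]|}\ll \frac{(\log X)^{2}}{X^{c-1}}\sum_{\substack{p_1\ne p_2\in J\\ p_1\equiv p_2\equiv l\,(d)}}\frac{1}{|p_1-p_2|}.
\end{equation*}
Parametrising the inner sum by writing $p_1-p_2=dm$ with $1\le |m|\ll X/d$, for each fixed $m\ne 0$ there are trivially at most $\ll X/d$ admissible values of $p_2$, so the double sum is $\ll \sum_{1\le m\ll X/d}(X/d)\cdot(dm)^{-1}\ll X(\log X)/d^{2}$. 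Combining gives the desired off-diagonal contribution $\ll X^{2-c}(\log X)^{3}/d^{2}$.

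Putting the two pieces together yields the lemma. The only step that needs a little care is the spacing assertion $|[p_1^c]-[p_2^c]|\asymp X^{c-1}|p_1-p_2|$: the fractional-part error is $O(1)$, while the main term $p_1^c-p_2^c=c\xi^{c-1}(p_1-p_2)$ for some $\xi\in(X/2,X)$ is $\asymp X^{c-1}|p_1-p_2|\gg X^{c-1}\gg1$ since $|p_1-p_2|\ge 1$ and $c>1$, so the error is subsumed and the $\asymp$ holds. Beyond this small technicality the argument is a direct, slightly more bookkeeping-heavy variant of the proof of Lemma~\ref{intLintI}(i), with the extra factor $d^{-2}$ produced by the congruence restriction $p_1\equiv p_2\equiv l\,(d)$ in the off-diagonal counting.
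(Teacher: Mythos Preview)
Your argument is correct and is precisely the standard route: expand the square, bound the $t$-integral by $\min\bigl(2\Delta,\,|[p_1^c]-[p_2^c]|^{-1}\bigr)$, use the mean-value spacing $|[p_1^c]-[p_2^c]|\asymp X^{c-1}|p_1-p_2|$ for $p_1\ne p_2$, and count pairs in the residue class $l\pmod d$ to extract the factor $d^{-2}$. The paper itself gives no in-text proof and simply refers to \cite{Dimitrov1}, Lemma~6(i), whose argument is exactly of this shape (there the sum involves $e(tp^c)$ rather than $e(t[p^c])$, but, as you note, the $O(1)$ fractional-part error is swallowed by $X^{c-1}$), so your approach coincides with the intended one.

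Two cosmetic remarks. First, the sentence invoking $(n+1)^{1/c}-n^{1/c}\asymp X^{1-c}<1$ to deduce that distinct primes have distinct integer parts is phrased backwards; the clean statement (which you in fact give later) is that $|p_1^c-p_2^c|\gg X^{c-1}\to\infty$, hence $[p_1^c]\ne[p_2^c]$. Second, your diagonal bound $X^{5/4-c}(\log X)^2/d$ is only dominated by $X^{2-c}(\log X)^3/d^2$ for $d\ll X^{3/4}$; the lemma as stated carries no hypothesis on $d$, but in the paper it is only invoked for $d\le 4D\ll X^{1/2}$, so this is harmless.
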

\begin{proof}
It follows by the arguments used in (\cite{Dimitrov1}, Lemma 6 (i)).
\end{proof}

\begin{lemma}\label{Wuest} Let  $\alpha$, $\beta$ be real numbers such that
\begin{equation*}
\alpha\beta(\alpha-1)(\beta-1)\neq0\,.
\end{equation*}
Set
\begin{equation*}
\Sigma_{I}=\sum\limits_{m\sim M}a(m)
\sum\limits_{l\in I_m}e\left(F\frac{m^\alpha l^\beta}{M^\alpha L^\beta}\right)\,,
\end{equation*}
where
\begin{equation*}
F>0\,, \quad M\geq1\,, \quad L\geq1\,, \quad |a_m|\ll1
\end{equation*}
and $I_m$ is a subinterval of $( L/2, L]$.
Then for any exponent pair $(\varkappa, \lambda)$, we have
\begin{equation*}
\Sigma_{I}\ll \left(F^{\frac{1+2\varkappa}{6+4\varkappa}}M^{\frac{4+4\varkappa}{6+4\varkappa}}L^{\frac{3+2\lambda}{6+4\varkappa}}
+M^{\frac{1}{2}}L+ML^{\frac{1}{2}}+F^{-1}ML\right)\log(2+FML)\,.
\end{equation*}
\end{lemma}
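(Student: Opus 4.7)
The plan is to bound the Type~II exponential sum $\Sigma_I$ by the classical triad of Cauchy--Schwarz, Weyl--van der Corput differencing (Lemma \ref{Squareout}), and the exponent-pair bound (Lemma \ref{Exponentpairs}). Set $G := F M^{-\alpha} L^{-\beta}$ and write $\Sigma_I = \sum_{m \sim M} a(m) T(m)$ with $T(m) = \sum_{l \in I_m} e(G m^\alpha l^\beta)$. Cauchy--Schwarz on the outer sum, using $|a(m)| \ll 1$, yields
\[
|\Sigma_I|^2 \leq M \sum_{m \sim M} |T(m)|^2;
\]
expanding $|T(m)|^2$ as $\sum_{l_1, l_2} e(G m^\alpha (l_1^\beta - l_2^\beta))$ and separating the diagonal $l_1 = l_2$ already produces the $M L^{1/2}$ term of the stated bound.

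For the off-diagonal I apply Lemma \ref{Squareout} to $T(m)$ with an auxiliary parameter $Q \in [1, L]$ to difference in $l$:
\[
|T(m)|^2 \ll \frac{L^2}{Q} + \frac{L}{Q} \sum_{1 \leq q \leq Q} \Bigl| \sum_{l} e\bigl( G m^\alpha \Delta_q(l) \bigr) \Bigr|,
\]
where $\Delta_q(l) := (l + q)^\beta - l^\beta \asymp q L^{\beta - 1}$ uniformly on $l \sim L$; this uniformity is exactly what $\beta(\beta - 1) \neq 0$ provides. Swapping the outer summation $\sum_m$ inside, for each fixed $q$ and $l$ the residual one-dimensional sum $\sum_m e(g(m))$ with $g(m) := G m^\alpha \Delta_q(l)$ satisfies $|g^{(k)}(m)| \asymp Y M^{1-k}$ with $Y \asymp F q/(M L)$; the condition $\alpha(\alpha - 1) \neq 0$ makes this derivative estimate monotonic, so Lemma \ref{Exponentpairs} delivers
\[
\Bigl| \sum_m e(g(m)) \Bigr| \ll \left( \frac{Fq}{ML} \right)^{\!\varkappa} M^\lambda + \frac{ML}{Fq}.
\]

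Summing over $l \sim L$ (a factor $O(L)$) and $1 \leq q \leq Q$ (using $\sum q^\varkappa \ll Q^{\varkappa + 1}$ and $\sum q^{-1} \ll \log Q$), then combining with the prefactors $M$ from Cauchy--Schwarz and $L/Q$ from Weyl--van der Corput, one assembles $|\Sigma_I|^2$ as the sum of three competing ingredients: a Cauchy--Schwarz diagonal $M^2 L^2 / Q$, an exponent-pair contribution of the shape $F^\varkappa M^{1 + \lambda - \varkappa} L^{2 - \varkappa} Q^\varkappa$, and a small-derivative tail of the shape $F^{-1} M^2 L^3 Q^{-1} \log Q$. Choosing $Q$ to balance the first two yields the principal term $F^{(1 + 2\varkappa)/(6 + 4\varkappa)} M^{(4 + 4\varkappa)/(6 + 4\varkappa)} L^{(3 + 2\lambda)/(6 + 4\varkappa)}$; the boundary case $Q = 1$ supplies $M^{1/2} L$, and the $Y^{-1}$-branch summed over $q$ supplies $F^{-1} M L$. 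The main obstacle is the bookkeeping at this optimization: one must verify that the optimal $Q$ actually lies in the admissible interval $[1, L]$, and when it does not one must fall back to the appropriate boundary term from the list $ML^{1/2}$, $M^{1/2} L$, $F^{-1} M L$, with the logarithmic factor $\log(2 + FML)$ accommodating the sub-leading $\log Q$ contributions.
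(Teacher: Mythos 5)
The paper offers no proof of this lemma---it is quoted directly from \cite{Wu}, Theorem 2---so the only question is whether your argument actually establishes the stated bound, and it does not: the final optimization is arithmetically inconsistent with the claimed exponents. Balancing $M^2L^2/Q$ against $F^{\varkappa}M^{1+\lambda-\varkappa}L^{2-\varkappa}Q^{\varkappa}$ forces $Q^{1+\varkappa}=F^{-\varkappa}M^{1+\varkappa-\lambda}L^{\varkappa}$ and yields, after taking square roots, a principal term
\begin{equation*}
F^{\frac{\varkappa}{2+2\varkappa}}\,M^{\frac{1+\varkappa+\lambda}{2+2\varkappa}}\,L^{\frac{2+\varkappa}{2+2\varkappa}}\,,
\end{equation*}
whose exponents have denominator $2+2\varkappa$, not $6+4\varkappa$; requiring $\frac{\varkappa}{2+2\varkappa}=\frac{1+2\varkappa}{6+4\varkappa}$ leads to $0=2$. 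This is not a recoverable bookkeeping slip, because the bound your method produces is genuinely weaker in some ranges: for $F\asymp M\asymp L$ and $(\varkappa,\lambda)=(\tfrac12,\tfrac12)$ your term is $L^{5/3}$, while every term on the right-hand side of the lemma is $O(L^{3/2})$. The missing idea is an additional saving of order $Q^{1/2}$ in the differenced sum: after Cauchy--Schwarz and Weyl--van der Corput, one must first apply the $B$-process (Poisson summation/stationary phase) to the sum over $m$, whose phase has size $\asymp Fq/L$, extracting a factor of shape $(Fq/L)^{1/2}$, and only then apply the exponent pair $(\varkappa,\lambda)$ to the remaining sum over $l$. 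It is this combination that produces a term of the form $C\,Q^{\frac12+\varkappa}$ to balance against $M^2L^2/Q$ and hence the denominator $6+4\varkappa$.

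Two secondary gaps: after the differencing step the summation condition $l,\,l+q\in I_m$ still depends on $m$, so you cannot simply ``swap $\sum_m$ inside'' the absolute value; a separation-of-variables device (e.g.\ a finite Fourier expansion of the indicator of $I_m$) is needed, and this is partly where the factor $\log(2+FML)$ comes from. Also, the boundary case $Q=1$ gives $M^2L^2$ from the first term, i.e.\ $ML$ after the square root, not $M^{1/2}L$, so that term of the lemma must be obtained differently.
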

\begin{proof}
See (\cite{Wu}, Theorem  2).
\end{proof}

The next two lemmas are due to C. Hooley.
\begin{lemma}\label{Hooley1}
For any constant $\omega>0$ we have
\begin{equation*}
\sum\limits_{p\leq X}
\bigg|\sum\limits_{d|p-1\atop{\sqrt{X}(\log X)^{-\omega}<d<\sqrt{X}(\log X)^{\omega}}}
\chi_4(d)\bigg|^2\ll \frac{X(\log\log X)^7}{\log X}\,,
\end{equation*}
where the constant in Vinogradov's symbol depends on $\omega>0$.
\end{lemma}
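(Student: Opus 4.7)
The plan is to open the square and extract cancellation from the non-principal character $\chi_4$ modulo $4$. Set $D_1=\sqrt{X}(\log X)^{-\omega}$ and $D_2=\sqrt{X}(\log X)^\omega$, and write
\[
\mathcal{S}:=\sum_{p\le X}\bigg|\sum_{\substack{d\mid p-1\\ D_1<d<D_2}}\chi_4(d)\bigg|^2
=\sum_{D_1<d_1,d_2<D_2}\chi_4(d_1)\chi_4(d_2)\,\pi(X;[d_1,d_2],1),
\]
where $\pi(X;q,a)$ counts primes $p\le X$ with $p\equiv a\,(q)$. The natural parametrization $g=(d_1,d_2)$, $d_i=ge_i$ with $(e_1,e_2)=1$ gives $[d_1,d_2]=ge_1e_2$ and $\chi_4(d_1)\chi_4(d_2)=\chi_4(g)^2\chi_4(e_1)\chi_4(e_2)$, so the sum is automatically supported on odd $g$ and the character data separates across $e_1,e_2$.

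The diagonal $d_1=d_2$ (that is, $e_1=e_2=1$) is the only place where $\chi_4$ contributes no cancellation. Here Brun--Titchmarsh bounds $\pi(X;g,1)\ll X/(\varphi(g)\log X)$, and together with the classical estimate $\sum_{D_1<g<D_2}1/\varphi(g)\ll\log\log X$ this produces a diagonal contribution of size $\ll X\log\log X/\log X$, already inside the target bound. For the off-diagonal terms, only pairs with $ge_1e_2\le X-1$ contribute, and a second application of Brun--Titchmarsh reduces $\mathcal{S}$ to bounding a double character sum of the shape
\[
\sum_{g\text{ odd}}\frac{1}{\varphi(g)}\sum_{\substack{(e_1,e_2)=1\\ (e_ie_j,g)=1}}\frac{\chi_4(e_1)\chi_4(e_2)}{\varphi(e_1)\varphi(e_2)}\,w(g,e_1,e_2),
\]
with smooth dyadic weights $w$ that encode the ranges of $d_i$ and the Brun--Titchmarsh denominator $\log(X/(ge_1e_2))$. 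The key saving comes from treating each inner sum in $e_i$ by Mellin inversion and using the regularity and non-vanishing of $L(s,\chi_4)$ at $s=1$; together these yield the desired extra factor $1/\log X$, with the remaining sum over $g$ producing the moment factor $(\log\log X)^7$ from resolving $1/\varphi$ as a Dirichlet convolution.

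The principal obstacle is precisely this cancellation step. Since $[d_1,d_2]=ge_1e_2$ can be as large as $X(\log X)^{2\omega}$, Bombieri--Vinogradov is out of reach, so the entire saving of a factor $\log X$ has to be squeezed from $\chi_4$ alone. The most delicate regime is where $ge_1e_2$ is close to $X$, because there the Brun--Titchmarsh factor $1/\log(X/[d_1,d_2])$ degenerates and must be absorbed by the character cancellation. This bilinear manipulation, together with the sharp bookkeeping of divisor-type moments responsible for the exponent $7$ of $\log\log X$, is the content of Hooley's original analysis; I would carry it out following the reference cited in the paper.
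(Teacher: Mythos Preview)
The paper gives no proof of this lemma; it simply cites Hooley's book (Chapter~5). In that sense your proposal already matches the paper: you open the square, parametrize by the gcd, and then defer the hard part to Hooley. As a citation this is fine.

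However, the intermediate sketch you offer has a genuine inconsistency that you should not leave in writing. You say that a ``second application of Brun--Titchmarsh reduces $\mathcal S$ to bounding a double character sum'' of the displayed shape, and then that ``the key saving comes from \ldots\ $L(s,\chi_4)$''. These two steps cannot be chained. Brun--Titchmarsh is an inequality, $\pi(X;q,1)\le 2X/(\varphi(q)\log(X/q))$; once you invoke it you have implicitly taken absolute values in the $(d_1,d_2)$-sum, and the signs $\chi_4(e_1)\chi_4(e_2)$ are gone. Conversely, if you want to keep the signed sum you would need an asymptotic for $\pi(X;[d_1,d_2],1)$ with $[d_1,d_2]$ as large as $X(\log X)^{2\omega}$, and as you yourself note this is out of reach of Bombieri--Vinogradov or anything else we have. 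So the displayed ``double character sum'' with weight $w$ is not something one legitimately arrives at from $\mathcal S$ by the route you describe.

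In Hooley's actual argument the saving does not come from $L(s,\chi_4)$ in the way you suggest; after opening the square the point is rather a careful counting of pairs $d_1,d_2$ with $[d_1,d_2]\mid p-1$, exploiting that the interval $(D_1,D_2)$ has multiplicative length only $(\log X)^{2\omega}$ --- this is where the Erd\H{o}s--Hooley machinery on the distribution of divisors enters, and where the precise power of $\log\log X$ originates. If you want to keep a heuristic sketch, replace the Brun--Titchmarsh/Mellin paragraph with a pointer to that divisor-in-short-interval analysis; otherwise, simply cite Hooley as the paper does.
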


\begin{lemma}\label{Hooley2} Suppose that $\omega>0$ is a constant
and let $\mathcal{F}_\omega(X)$ be the number of primes $p\leq X$
such that $p-1$ has a divisor in the interval $\big(\sqrt{X}(\log X)^{-\omega}, \sqrt{X}(\log X)^\omega\big)$.
Then
\begin{equation*}
\mathcal{F}_\omega(X)\ll\frac{X(\log\log X)^3}{(\log X)^{1+2\theta_0}}\,,
\end{equation*}
where $\theta_0$ is defined by \eqref{theta0} and the constant in
Vinogradov's symbol depends only on $\omega>0$.
\end{lemma}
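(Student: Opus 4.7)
The plan is to exploit Lemma \ref{Hooley1} through a dichotomy argument on the magnitude of the character sum
\[
\sigma(p) := \sum_{d \mid p-1,\, d \in I} \chi_4(d), \qquad I := \bigl(\sqrt{X}(\log X)^{-\omega},\, \sqrt{X}(\log X)^{\omega}\bigr).
\]
Choose a threshold $T > 0$ to be optimised later and decompose $\mathcal{F}_\omega(X) = \mathcal{F}^{+}(X) + \mathcal{F}^{-}(X)$, where $\mathcal{F}^{+}(X)$ counts the primes $p \leq X$ with $|\sigma(p)| \geq T$, while $\mathcal{F}^{-}(X)$ counts those primes $p \leq X$ for which $p-1$ has a divisor in $I$ but $|\sigma(p)| < T$. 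The first contribution is immediate from Chebyshev's inequality combined with Lemma \ref{Hooley1}:
\[
\mathcal{F}^{+}(X) \;\leq\; T^{-2}\sum_{p \leq X}|\sigma(p)|^{2} \;\ll\; \frac{X(\log\log X)^{7}}{T^{2}\,\log X}.
\]

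For $\mathcal{F}^{-}(X)$, write $p-1 = 2^{a}m$ with $m$ odd; since $\chi_{4}$ vanishes on even integers, $\sigma(p) = \sum_{d \mid m,\, d \in I}\chi_{4}(d)$. The primes counted in $\mathcal{F}^{-}(X)$ fall into two groups: (i) those for which every divisor of $p-1$ in $I$ is even, which are handled by Brun--Titchmarsh applied to $p \equiv 1 \pmod{d'}$ over odd $d'$ with $2^{j}d' \in I$ for some $j \geq 1$; and (ii) those for which $m$ admits an odd divisor in $I$ but the $\chi_{4}$-values on the divisors of $m$ in $I$ conspire to near-total cancellation. Case (ii) is the heart of the matter: forcing $|\sigma(p)| < T$ in the presence of a divisor in $I$ places a rigid constraint on the prime factorisation of $m$ (the prime factors building the divisors in $I$ must be sharply biased between the residue classes $1$ and $3$ modulo $4$), and primes satisfying such a constraint are rare by a Rankin-type upper-bound sieve. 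The optimum of the Rankin parameter is exactly what produces the exponent $\theta_{0} = \tfrac{1}{2} - \tfrac{1}{4}e\log 2$ from \eqref{theta0}, mirroring Hooley's treatment of the error term in Linnik's theorem, and yields a bound of the form
\[
\mathcal{F}^{-}(X) \;\ll\; \frac{X(\log\log X)^{c_{1}}}{(\log X)^{1+4\theta_{0}}}\,T^{2}
\]
for an absolute constant $c_{1}$.

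Balancing the two estimates by choosing $T^{2} \asymp (\log X)^{2\theta_{0}}$ produces $\mathcal{F}_\omega(X) \ll X(\log\log X)^{c_{2}}(\log X)^{-1-2\theta_{0}}$, and a careful bookkeeping of the logarithmic factors in both regimes (in particular refining the mean-square input of Lemma \ref{Hooley1} by separating out the contribution of divisors with few prime factors, which avoids paying the full $(\log\log X)^{7}$) allows one to push $c_{2}$ down to $3$, giving the claimed bound.

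The main obstacle is the sieve-theoretic/multiplicative analysis of case (ii): one has to prove that primes $p \leq X$ whose $p-1$ admits an odd divisor in $I$ yet exhibits anomalous cancellation in $\sigma(p)$ are rare, and to track the constant $\theta_{0}$ precisely through the Rankin optimisation. The remaining ingredients---Chebyshev's inequality, Brun--Titchmarsh, and the mean-square estimate of Lemma \ref{Hooley1}---are standard and enter routinely.
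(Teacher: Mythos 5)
The paper offers no proof of this lemma: it simply refers to Hooley's monograph (Ch.\ 5), where the argument is a pure ``anatomy of integers'' computation in which the character $\chi_4$ plays no role at all. One splits the primes $p\le X$ according to whether $\nu(p-1)$, the number of distinct prime factors of $p-1$, exceeds $\tfrac{e}{2}\log\log X$; the primes with many prime factors are counted by a Hardy--Ramanujan/Erd\H{o}s-type bound for shifted primes, and for the rest one writes $p-1=de$ with $d$ in the critical interval, observes that $e$ then lies in an interval of the same shape so that one of $d,e$ has at most $\tfrac{e}{4}\log\log X$ prime factors, and applies Brun--Titchmarsh summed \emph{only} over moduli in the interval with at most that many prime factors. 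Balancing the two counts is precisely what produces $1+2\theta_0=2-\tfrac{e\log 2}{2}$ (note $\tfrac{e}{2}\big(\log\tfrac{e}{2}-1\big)=-\tfrac{e\log2}{2}$), and the $(\log\log X)^3$ comes from bookkeeping in these two steps.

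Your character-sum dichotomy has two genuine gaps. First, the part you call routine already fails: summing Brun--Titchmarsh over \emph{all} moduli $d$ in $\big(\sqrt{X}(\log X)^{-\omega},\sqrt{X}(\log X)^{\omega}\big)$ (or over all odd $d'$ with $2^jd'$ there) gives $\sum_d X/\big(\varphi(d)\log(X/d)\big)\asymp X\log\log X/\log X$, which is \emph{larger} than the bound to be proved; Brun--Titchmarsh is only useful after the moduli have been thinned to those with few prime factors, and that thinning is the whole content of the lemma. Second, the central claim --- that primes with a divisor in $I$ but $|\sigma(p)|<T$ are rare because cancellation ``places a rigid constraint on the prime factorisation'' --- is not an argument and is not true: $\sigma(p)$ vanishes whenever all divisors of $p-1$ in $I$ are even, and whenever the odd ones split evenly between the residue classes $1$ and $3$ modulo $4$ (already when there are exactly two of them in opposite classes). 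Such configurations are generic, so the set you call $\mathcal{F}^-(X)$ is essentially the whole of $\mathcal{F}_\omega(X)$ minus the large-$\sigma$ part, and bounding it is circular --- it is the original problem. The bound $\mathcal{F}^-(X)\ll X(\log\log X)^{c_1}(\log X)^{-1-4\theta_0}T^2$ and the appearance of $\theta_0$ in it are asserted, not derived. Lemma \ref{Hooley1} is used in the paper only to control $\Gamma_2$ via the factor $\Sigma_0'$; it is not an ingredient of the present lemma, and you should replace the dichotomy on $|\sigma(p)|$ by the dichotomy on $\nu(p-1)$ and $\nu(d)$ described above.
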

The proofs of very similar results are available in (\cite{Hooley}, Ch.5).

\section{Outline of the proof}
\indent
From \eqref{Gamma} and well-known identity
\begin{equation*}
r(n)=4\sum_{d|n}\chi_4(d)
\end{equation*}
we obtain
\begin{equation} \label{Gamma0decomp}
\Gamma=4\big(\Gamma_1+\Gamma_2+\Gamma_3\big),
\end{equation}
where
\begin{align}
\label{Gamma1}
&\Gamma_1=\sum\limits_{X/2<p_1,p_2,p_3\leq X\atop{[p^c_1]+[p^c_2]+[p^c_3]=N}}
\left(\sum\limits_{d|p_1-1\atop{d\leq D}}\chi_4(d)\right)\log p_1\log p_2\log p_3\,,\\
\label{Gamma2}
&\Gamma_2=\sum\limits_{X/2<p_1,p_2,p_3\leq X\atop{[p^c_1]+[p^c_2]+[p^c_3]=N}}
\left(\sum\limits_{d|p_1-1\atop{D<d<X/D}}\chi_4(d)\right)\log p_1\log p_2\log p_3\,,\\
\label{Gamma3}
&\Gamma_3=\sum\limits_{X/2<p_1,p_2,p_3\leq X\atop{[p^c_1]+[p^c_2]+[p^c_3]=N}}
\left(\sum\limits_{d|p_1-1\atop{d\geq X/D}}\chi_4(d)\right)\log p_1\log p_2\log p_3\,.
\end{align}
In order to estimate $\Gamma_1$ and $\Gamma_3$ we have to consider
the sum
\begin{equation} \label{Ild}
I_{l,d;J}(N)=\sum\limits_{X/2<p_2,p_3\leq X\atop{[p^c_1]+[p^c_2]+[p^c_3]=N\atop{p_1\equiv l\;(d)
\atop{p_1\in J}}}}\log p_1\log p_2\log p_3\,,
\end{equation}
where $d$ and $l$ are coprime natural numbers, and $J\subset(X/2,X]$-interval.
If $J=(X/2,X]$ then we write for simplicity $I_{l,d}(N)$.
Clearly

\begin{align}\label{Ilddecomp}
I_{l,d;J}(N)&=\int\limits_{-\Delta}^{1-\Delta}S_{l,d;J}(t)S^2(t)e(-t N)\,dt\nonumber\\
&=\int\limits_{-\Delta}^{\Delta}S_{l,d;J}(t)S^2(t)e(-t N)\,dt
+\int\limits_{\Delta}^{1-\Delta}S_{l,d;J}(t)S^2(t)e(-t N)\,dt\nonumber\\
&=I_{l,d;J}^{(1)}(N)+I_{l,d;J}^{(2)}(N)\,,
\end{align}
where
\begin{align}
\label{Ild1}
&I_{l,d;J}^{(1)}(N)=\int\limits_{-\Delta}^{\Delta}S_{l,d;J}(t)S^2(t)e(-t N)\,dt\,,\\
\label{Ild2}
&I_{l,d;J}^{(2)}(N)=\int\limits_{\Delta}^{1-\Delta}S_{l,d;J}(t)S^2(t)e(-t N)\,dt\,.
\end{align}
We shall estimate $I^{(1)}_{l,d;J}(N)$, $\Gamma_3$, $\Gamma_2$ and $\Gamma_1$,
respectively, in the sections \ref{SectionIld1}, \ref{SectionGamma3},
\ref{SectionGamma2} and \ref{SectionGamma1}.
In section \ref{Sectionfinal} we shall finalize the proof of Theorem \ref{Theorem}.

\section{Asymptotic formula for $\mathbf{I^{(1)}_{l,d;J}(N)}$}\label{SectionIld1}
\indent

Using \eqref{SldalphaX}, \eqref{Sldoverline} and $|t|\leq\Delta$ we write
\begin{align}\label{Sld-Soverline}
S_{l,d;J}(t)&=\sum\limits_{p\in J\atop{p\equiv l\, (d)}} e\big(t p^c+\mathcal{O}(|t|)\big)\log p
=\sum\limits_{p\in J\atop{p\equiv l\, (d)}} e(t p^c)\big(1+\mathcal{O}(|t|)\big)\log p\nonumber\\
&= \overline{S}_{l,d;J}(t)+\mathcal{O}\left(\frac{\Delta X\log X}{d}\right)\,.
\end{align}
Put
\begin{align}
\label{S1}
&S_1=S(t)\,,\\
\label{S2}
&S_2=S_{l,d;J}(t)\,,\\
\label{I1}
&I_1=I(t)\,,\\
\label{I2}
&I_2=\frac{I_J(t)}{\varphi(d)}\,.
\end{align}
We use the identity
\begin{equation}\label{Identity}
S^2_1S_2=I^2_1I_2+(S_2-I_2)I_1^2+S_2(S_1-I_1)I_1+S_1S_2(S_1-I_1)\,.
\end{equation}
Define
\begin{equation}\label{PhiDeltaJX}
\Phi_{\Delta, J}(X, d)=\frac{1}{\varphi(d)}
\int\limits_{-\Delta}^{\Delta}I^2(t)I_J(t)e(-Nt)\,dt\,.
\end{equation}
From  \eqref{Delta} -- \eqref{IalphaX}, \eqref{Ild1},
\eqref{Sld-Soverline} -- \eqref{PhiDeltaJX}, Lemma \ref{SIasympt},
Lemma \ref{intLintI}, Lemma \ref{intSld} and  Cauchy's inequality it follows
\begin{align}\label{Ild1-PhiDeltaJX}
I^{(1)}_{l,d;J}(X)-\Phi_{\Delta, J}(X, d)
&=\int\limits_{-\Delta}^{\Delta}\Bigg(S_{l,d;J}(t)-\frac{I_J(t)}{\varphi(d)}\Bigg)
I^2(t)e(-N t)\,dt\nonumber\\
&+\int\limits_{-\Delta}^{\Delta}S_{l,d;J}(t)\Big(S(t)-I(t)\Big)I(t)e(-Nt)\,dt\nonumber\\
&+\int\limits_{-\Delta}^{\Delta}S(t)S_{l,d;J}(t)\Big(S(t)-I(t)\Big)e(-Nt)\,dt\nonumber\\
&\ll \Bigg(\max\limits_{|t|\leq\Delta}\bigg|\overline{S}_{l,d;J}(t)-\frac{I_J(t)}{\varphi(d)}\bigg|+\frac{\Delta X\log X}{d}\Bigg)
\int\limits_{-\Delta}^{\Delta}|I(t)|^2\,dt\nonumber\\
&+\Bigg(\frac{X}{e^{(\log X)^{\frac{1}{5}}}}+\Delta X\Bigg)
\Bigg(\int\limits_{-\Delta}^{\Delta}|S_{l,d;J}(t)|^2\,dt\Bigg)^{\frac{1}{2}}
\Bigg(\int\limits_{-\Delta}^{\Delta}|I(t)|^2\,dt)\Bigg)^{\frac{1}{2}}\nonumber\\
&+\Bigg(\frac{X}{e^{(\log X)^{\frac{1}{5}}}}+\Delta X\Bigg)
\Bigg(\int\limits_{-\Delta}^{\Delta}|S(t)|^2\,dt\Bigg)^{\frac{1}{2}}
\Bigg(\int\limits_{-\Delta}^{\Delta}|S_{l,d;J}(t)|^2\,dt\Bigg)^{\frac{1}{2}}\nonumber\\
&\ll X^{2-c}(\log X)\max\limits_{|t|\leq\Delta}
\bigg|\overline{S}_{l,d;J}(t)-\frac{I_J(t)}{\varphi(d)}\bigg|
+\frac{X^{3-c}}{de^{(\log X)^{\frac{1}{6}}}}\,.
\end{align}
Put
\begin{equation}\label{PhiJX}
\Phi_J(X, d)=\frac{1}{\varphi(d)}\int\limits_{-\infty}^{\infty}I^2(t)I_J(t)e(-Nt)\,dt\,.
\end{equation}
Using \eqref{IJalphaX}, \eqref{IalphaX}, \eqref{PhiDeltaJX}, \eqref{PhiJX} and the estimations
\begin{equation}\label{IalphaXest}
I_J(t)\ll \min \left(X, \,\frac{X^{1-c}}{|t|}\right)\,,  \quad
I(t)\ll \min \left(X, \,\frac{X^{1-c}}{|t|}\right)
\end{equation}
we deduce
\begin{equation*}
\Phi_{\Delta, J}(X, d)-\Phi_J(X, d)\ll \frac{1}{\varphi(d)}
\int\limits_{\Delta}^{\infty} |I(t)|^2|I_J(t)|\,dt
\ll\frac{ X^{3-3c}}{\varphi(d)} \int\limits_{\Delta}^{\infty} \frac{dt}{t^3}
\ll \frac{ X^{3-3c}}{\varphi(d)\Delta^2}
\end{equation*}
and therefore
\begin{equation}\label{JXest1}
\Phi_{\Delta, J}(X, d)=\Phi_J(X, d)
+\mathcal{O}\left(\frac{ X^{3-3c}}{\varphi(d)\Delta^2}\right)\,.
\end{equation}
Finally  \eqref{Delta}, \eqref{Ild1-PhiDeltaJX}, \eqref{JXest1} and the identity
\begin{equation*}
I^{(1)}_{l,d;J}(X)=I^{(1)}_{l,d;J}(X)-\Phi_{\Delta, J}(X, d)
+\Phi_{\Delta, J}(X, d)-\Phi_J(X, d)+\Phi_J(X, d)
\end{equation*}
yield
\begin{equation}\label{Ild1est}
I^{(1)}_{l,d;J}(X)=\Phi_J(X, d)
+\mathcal{O}\Bigg( X^{2-c}
(\log X)\max\limits_{|t|\leq\Delta}\bigg|\overline{S}_{l,d;J}(t)-\frac{I_J(t)}{\varphi(d)}\bigg|\Bigg)
+\mathcal{O}\bigg(\frac{ X^{3-c}}{de^{(\log X)^{\frac{1}{6}}}}\bigg)\Bigg)\,.
\end{equation}

We are now in a good position to  estimate the sum $\Gamma_3$.

\section{Upper bound of $\mathbf{\Gamma_3}$}\label{SectionGamma3}
\indent

Consider the sum  $\Gamma_3$.\\
Since
\begin{equation*}
\sum\limits_{d|p_1-1\atop{d\geq X/D}}\chi_4(d)=\sum\limits_{m|p_1-1\atop{m\leq (p_1-1)D/X}}
\chi_4\bigg(\frac{p_1-1}{m}\bigg)
=\sum\limits_{j=\pm1}\chi_4(j)\sum\limits_{m|p_1-1\atop{m\leq (p_1-1)D/X
\atop{\frac{p_1-1}{m}\equiv j\,(4)}}}1
\end{equation*}
then from \eqref{Gamma3} and \eqref{Ild} we get
\begin{equation*}
\Gamma_3=\sum\limits_{m<D\atop{2|m}}\sum\limits_{j=\pm1}\chi_4(j)I_{1+jm,4m;J_m}(N)\,,
\end{equation*}
where $J_m=\big(\max\{1+mX/D,X/2\},X\big]$.
The last formula and \eqref{Ilddecomp} imply
\begin{equation}\label{Gamma3decomp}
\Gamma_3=\Gamma_3^{(1)}+\Gamma_3^{(2)}\,,
\end{equation}
where
\begin{equation}\label{Gamma3i}
\Gamma_3^{(i)}=\sum\limits_{m<D\atop{2|m}}\sum\limits_{j=\pm1}\chi_4(j)
I_{1+jm,4m;J_m}^{(i)}(N)\,,\;\; i=1,\,2\,.
\end{equation}

\subsection{Estimation of $\mathbf{\Gamma_3^{(1)}}$}
\indent

From \eqref{Ild1est} and \eqref{Gamma3i} we obtain
\begin{align}\label{Gamma31}
\Gamma_3^{(1)}=\Gamma^*&
+\mathcal{O}\Big( X^{2-c}(\log X)\Sigma_1\Big)
+\mathcal{O}\bigg(\frac{ X^{3-c}}{e^{(\log X)^{\frac{1}{6}}}}\Sigma_2\bigg)\,,
\end{align}
where
\begin{align}
\label{Gamma*}
&\Gamma^*=\sum\limits_{m<D\atop{2|m}}
\Phi_J(X, 4m)\sum\limits_{j=\pm1}\chi_4(j)\,,\\
\label{Sigma1}
&\Sigma_1=\sum\limits_{m<D\atop{2|m}}
\max\limits_{|t|\leq\Delta}\bigg|\overline{S}_{1+jm,4m;J}(t)-\frac{I_J(t)}{\varphi(4m)}\bigg|\,,\\
\label{Sigma2}
&\Sigma_2=\sum\limits_{m<D}\frac{1}{4m}\,.
\end{align}
From the properties of $\chi(k)$ we have that
\begin{equation}\label{Gamma*est}
\Gamma^*=0\,.
\end{equation}
By \eqref{D}, \eqref{SldalphaX},  \eqref{IJalphaX}, \eqref{Sigma1} and Lemma \ref{Bomb-Vin-Dim} we find
\begin{equation}\label{Sigma1est}
\Sigma_1\ll\frac{X}{\log^AX}\,.
\end{equation}
It is well known that
\begin{equation}\label{Sigma2est}
\Sigma_2\ll \log X\,.
\end{equation}
Bearing in mind \eqref{Gamma31}, \eqref{Gamma*est}, \eqref{Sigma1est} and \eqref{Sigma2est} we deduce
\begin{equation}\label{Gamma31est}
\Gamma_3^{(1)}\ll\frac{ X^{3-c}}{\log X}\,.
\end{equation}

\subsection{Estimation of $\mathbf{\Gamma_3^{(2)}}$}
\indent

Now we consider $\Gamma_3^{(2)}$. The formulas \eqref{Ild2} and \eqref{Gamma3i} give us
\begin{equation}\label{Gamma32}
\Gamma_3^{(2)}=\int\limits_{\Delta}^{1-\Delta}S^2(t)K(t)e(-Nt)\,dt\,,
\end{equation}
where
\begin{equation}\label{Kt}
K(t)=\sum\limits_{m<D\atop{2|m}}\sum\limits_{j=\pm1}\chi_4(j)S_{1+jm,4m;J_m}(t)\,.
\end{equation}

\begin{lemma}\label{IntK2}
For the sum denoted by \eqref{Kt} we have
\begin{equation*}
\int\limits_{0}^{1}|K(t)|^2\,dt\ll X\log^6X\,.
\end{equation*}
\end{lemma}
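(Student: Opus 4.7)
The plan is to exchange summation in $K(t)$, apply the orthogonality $\int_0^1 e(tn)\,dt=\delta_{n,0}$ on integers, reduce to a diagonal sum over primes, and conclude by Brun--Titchmarsh.

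First, by swapping summation in \eqref{Kt} and recognising the inner sum over $(m,j)$ as the identity derived at the start of Section~\ref{SectionGamma3} read in reverse, one obtains
\[K(t)=\sum_{X/2<p\leq X}a(p)\,e(t[p^c])\log p,\qquad a(p)=\sum_{\substack{d\mid p-1\\ d\geq X/D}}\chi_4(d),\]
with the trivial bound $|a(p)|\leq\tau(p-1)$.

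Next, by orthogonality of $\{e(tn)\}_{n\in\mathbb{Z}}$ on $[0,1]$,
\[\int_0^1|K(t)|^2\,dt=\sum_{\substack{X/2<p_1,p_2\leq X\\{}[p_1^c]=[p_2^c]}}a(p_1)\overline{a(p_2)}\log p_1\log p_2.\]
For any fixed $c>1$ (in particular for $c<16559/15276<4$), two distinct primes $p_1<p_2$ in $(X/2,X]$ satisfy $p_2-p_1\geq 2$, whence $p_2^c-p_1^c\geq c\cdot 2^{2-c}X^{c-1}>1$ for $X$ large, forcing $[p_1^c]\neq[p_2^c]$. So only the diagonal $p_1=p_2$ contributes, and
\[\int_0^1|K(t)|^2\,dt=\sum_{X/2<p\leq X}|a(p)|^2\log^2 p\leq(\log X)^2\sum_{X/2<p\leq X}|a(p)|^2.\]

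Finally, I would bound $\sum_p|a(p)|^2$ via $|a(p)|^2\leq T(p)^2$, where $T(p)=\#\{d\mid p-1:\ d\geq X/D\}$, together with the divisor involution $d\mapsto e=(p-1)/d$, which bijects $\{d\mid p-1:\ d\geq X/D\}$ with $\{e\mid p-1:\ e<D\}$. Expanding the square,
\[\sum_p|a(p)|^2\leq\sum_{e_1,e_2<D}\#\{p\leq X:\ [e_1,e_2]\mid p-1\}.\]
Since $[e_1,e_2]\leq D^2\leq X/(\log X)^{2A}$, Brun--Titchmarsh yields $\#\{p\leq X:\ q\mid p-1\}\ll X/\bigl(\varphi(q)\log(X/q)\bigr)$. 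Combined with the standard bound $\sum_{e_1,e_2<D}1/\varphi([e_1,e_2])\ll\log^3 D$ (obtained by fixing $g=\gcd(e_1,e_2)$ and using $\sum_{e\leq Y}1/\varphi(e)\ll\log Y$), one gets $\sum_p|a(p)|^2\ll X\log^4 X$, and therefore
\[\int_0^1|K(t)|^2\,dt\ll X\log^6 X,\]
as required. The only mildly technical point is the double divisor sum; the rest is a routine Parseval plus Brun--Titchmarsh argument, so I anticipate no serious obstacle.
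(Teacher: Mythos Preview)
Your argument is correct. The swap of summation recovering $K(t)=\sum_{X/2<p\le X}a(p)e(t[p^c])\log p$ with $a(p)=\sum_{d\mid p-1,\,d\ge X/D}\chi_4(d)$ is exactly the identity the paper uses to pass from $\Gamma_3$ to the $(m,j)$-parametrisation, read backwards; the congruence $p\equiv 1+jm\pmod{4m}$ already forces $(p-1)/m\equiv j\pmod 4$ to be odd, so no terms are lost. The diagonal reduction via orthogonality is valid since for distinct primes $p_1<p_2$ in $(X/2,X]$ one has $p_2^c-p_1^c\ge 2c(X/2)^{c-1}>1$. For the final sum you can in fact avoid Brun--Titchmarsh altogether: the trivial bound $\#\{p\le X:q\mid p-1\}\le X/q+1$ together with $\sum_{e_1,e_2<D}1/[e_1,e_2]\ll\log^3 D$ (via the $\gcd$ substitution you indicate) already gives $\sum_p|a(p)|^2\ll X\log^3 X$, comfortably within the target.

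The paper does not prove this lemma in situ; it simply cites \cite{Dimitrov3}, Lemma~22. Your self-contained argument is the natural one and matches what that reference does in spirit, so there is no meaningful methodological difference to report.
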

\begin{proof}
See (\cite{Dimitrov3}, Lemma 22).
\end{proof}

\begin{lemma}\label{SIest} Assume that
\begin{equation}\label{Conditions1}
\Delta \leq |t| \leq 1-\Delta\,, \quad |a(m)|\ll m^\varepsilon \,,\quad LM\asymp X\,,\quad L\gg X^{\frac{4}{9}}
\end{equation}
and $c_h(t)$ denote complex numbers such that $|c_h(t)|\ll (1+|h|)^{-1}$.\\
Set
\begin{equation*}
S_I=\sum\limits_{|h|\leq H}c_h(t)\sum\limits_{m\sim M}a(m)\sum\limits_{l\sim L}e((h+t)m^cl^c)\,.
\end{equation*}
Then
\begin{equation*}
S_I\ll X^{\frac{13993}{15276}+\varepsilon}\,.
\end{equation*}
\end{lemma}

\begin{proof} We have

\begin{equation}\label{SI}
S_I\ll X^\varepsilon\max\limits_{|\eta|\in (\Delta, H+1)}  \sum\limits_{m\sim M}\left|\sum\limits_{l\sim L}e(\eta m^cl^c)\right|\,.
\end{equation}
We first consider the case when
\begin{equation}\label{M411}
M\ll X^{\frac{12620}{34371}}\,.
\end{equation}
From \eqref{Delta}, \eqref{H}, \eqref{Conditions1}, \eqref{SI}, \eqref{M411} and Lemma \ref{Exponentpairs}
with the exponent pair $\left(\frac{2}{40},\frac{33}{40}\right)$  it follows
\begin{align}\label{SIest1}
S_I&\ll  X^\varepsilon\max\limits_{|\eta|\in (\Delta, H+1)}
\sum\limits_{m\sim M}\Bigg(\big(|\eta|X^cL^{-1}\big)^{\frac{2}{40}}L^{\frac{33}{40}}+\frac{1}{|\eta|X^cL^{-1}}\Bigg)\nonumber\\
&\ll  X^\varepsilon\max\limits_{|\eta|\in (\Delta, H+1)}\Bigg(|\eta|^{\frac{2}{40}}X^{\frac{2c}{40}}ML^{\frac{31}{40}}
+\frac{LM}{|\eta|X^c} \Bigg)\nonumber\\
&\ll  X^\varepsilon\Big(H^{\frac{1}{20}}X^{\frac{2c+31}{40}}M^{\frac{9}{40}}+   \Delta^{-1}X^{1-c}\Big)\nonumber\\
&\ll X^{\frac{13993}{15276}+\varepsilon}\,.
\end{align}
Next we consider the case when
\begin{equation}\label{M41135}
X^{\frac{12620}{34371}}\ll M\ll X^{\frac{5}{9}}\,.
\end{equation}
Using \eqref{SI}, \eqref{M41135} and Lemma \ref{Wuest} with the exponent pair $\left(\frac{2}{7},\frac{4}{7}\right)$ we deduce
\begin{align}\label{SIest2}
S_I&\ll X^\varepsilon\max\limits_{|\eta|\in (\Delta, H+1)}
\bigg((|\eta X^c|)^{\frac{11}{50}}M^{\frac{36}{50}}L^{\frac{29}{50}}+M^{\frac{1}{2}}L+ML^{\frac{1}{2}}+|\eta|^{-1}X^{-c}LM\bigg)\nonumber\\
&\ll X^\varepsilon
\Big(H^{\frac{11}{50}}M^{\frac{7}{50}}X^{\frac{11c+29}{50}}+XM^{-\frac{1}{2}}+X^{\frac{1}{2}}M^{\frac{1}{2}}+\Delta^{-1}X^{1-c}\Big)\nonumber\\
&\ll X^{\frac{13993}{15276}+\varepsilon}\,.
\end{align}
Bearing in mind \eqref{SIest1} and \eqref{SIest2} we establish the statement in the lemma.
\end{proof}

\begin{lemma}\label{SIIest} Assume that
\begin{equation}\label{Conditions2}
\Delta \leq |t| \leq 1-\Delta\,, \quad |a(m)|\ll m^\varepsilon \,, \quad |b(l)|\ll l^\varepsilon\,,
\quad LM\asymp X\,,\quad X^{\frac{1}{9}} \ll L\ll X^{\frac{1}{3}}
\end{equation}
and $c_h(t)$ denote complex numbers such that $|c_h(t)|\ll (1+|h|)^{-1}$.\\
Set
\begin{equation*}
S_{II}=\sum\limits_{|h|\leq H}c_h(t)\sum\limits_{m\sim M}a(m)\sum\limits_{l\sim L}b(l)e((h+t)m^cl^c)\,.
\end{equation*}
Then
\begin{equation*}
S_{II}\ll X^{\frac{13993}{15276}+\varepsilon}\,.
\end{equation*}
\end{lemma}
\begin{proof}

Using Cauchy's inequality and Lemma \ref{Squareout} with $Q=X^{\frac{3839}{15276}}$ we obtain
\begin{equation}\label{SIIest1}
|S_{II}|\ll X^\varepsilon\sum\limits_{|h|\leq H}\big|c_h(t)\big|\Bigg(\frac{X^2}{Q}+\frac{X}{Q}\sum\limits_{1\leq q\leq Q}
\sum\limits_{l\sim L}\bigg|\sum\limits_{m\sim M}e\big(f(l, m, q)\big)\bigg|\Bigg)^{\frac{1}{2}}\,,
\end{equation}
where $f_h(l, m, q)=(h+t)m^c\big((l+q)^c-l^c\big)$.
Now  \eqref{H}, \eqref{Conditions2}, \eqref{SIIest1} and Lemma \ref{Exponentpairs} with the exponent pair
\begin{equation*}
(\varkappa, \lambda)=BABABA^2BA^3BA^2B(0, 1)=\left(\frac{214}{845},\frac{199}{338}\right)
\end{equation*}
imply
\begin{align*}
S_{II}&\ll X^\varepsilon\sum\limits_{|h|\leq H}\big|c_h(t)\big|\Bigg(\frac{X^2}{Q}+\frac{X}{Q}\sum\limits_{1\leq q\leq Q}
\sum\limits_{l\sim L}\bigg(\big(|h+t|qX^{c-1}\big)^{\frac{214}{845}}M^{\frac{199}{338}}+\frac{1}{|h+t|qX^{c-1}}\bigg)\Bigg)^{\frac{1}{2}}\\
&\ll X^\varepsilon\sum\limits_{|h|\leq H}\big|c_h(t)\big|\Bigg(\frac{X^2}{Q}+\frac{X}{Q}
\bigg(H^{\frac{214}{845}}X^{\frac{214(c-1)}{845}}M^{\frac{199}{338}}Q^{\frac{1059}{845}}L+\Delta^{-1}X^{1-c}L\log Q\bigg)\Bigg)^{\frac{1}{2}}\\
&\ll X^{\frac{13993}{15276}+\varepsilon}\sum\limits_{|h|\leq H}\big|c_h(t)\big|
\ll X^{\frac{13993}{15276}+\varepsilon}\sum\limits_{|h|\leq H}\frac{1}{1+|h|}
\ll X^{\frac{13993}{15276}+\varepsilon}\,,
\end{align*}
which proves the statement in the lemma.
\end{proof}

\begin{lemma}\label{SalphaXest} Let $\Delta \leq |t| \leq 1-\Delta$.
Then  for the exponential sum denoted by \eqref{SalphaX} we have
\begin{equation*}
S(t)\ll X^{\frac{13993}{15276}+\varepsilon}\,.
\end{equation*}
\end{lemma}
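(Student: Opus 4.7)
The strategy is to strip off the floor function by Lemma \ref{Buriev}, and then apply Heath--Brown's identity (Lemma \ref{Heath-Brown}) to break the resulting exponential sum over primes into Type I and Type II pieces that fall exactly under the hypotheses of Lemmas \ref{SIest} and \ref{SIIest}. Writing $[p^c] = p^c - \{p^c\}$ and applying Lemma \ref{Buriev} with $x = t$, $y = p^c$, and $H$ as in \eqref{H}, we obtain
\begin{equation*}
S(t) \;=\; \sum_{|h|\le H} c_h(t)\sum_{X/2 < p \le X} e\big((h+t)p^c\big)\log p \;+\; O(R),
\end{equation*}
where $R \ll \sum_{X/2 < p \le X}(\log p)\min\!\bigl(1,(H\|p^c\|)^{-1}\bigr)$. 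A standard dyadic Fourier treatment of the $\min$-function gives $R \ll X^{1+\varepsilon}/H$, and since $H$ is defined by \eqref{H} so that $X/H = X^{13993/15276}$, this error meets the target exponent.

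Next, fix $h$ with $|h|\le H$ and apply Lemma \ref{Heath-Brown} to the inner prime sum $\sum_p e((h+t)p^c)\log p$ with parameters
\begin{equation*}
U = X^{1/9},\qquad V = X^{1/3},\qquad Z = X^{4/9},
\end{equation*}
perturbed by an arbitrarily small $\eta>0$ so that the conditions $X \gg Z^2U$, $Z\gg U^2$, $V^3 \gg X$ hold strictly. This decomposes the sum into $O(\log^{10}X)$ subsums, each either of Type I with $L \gg X^{4/9}$, matching exactly the hypothesis of Lemma \ref{SIest}, or of Type II with $X^{1/9} \ll L \ll X^{1/3}$, matching exactly the hypothesis of Lemma \ref{SIIest}. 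Each subsum is therefore $\ll X^{13993/15276+\varepsilon}$. Finally the summation over $|h|\le H$, using $|c_h(t)|\ll (1+|h|)^{-1}$, introduces at most a $\log H$ factor, absorbed into $X^{\varepsilon}$.

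The main obstacle is the tight three-way calibration of the exponent $\tfrac{13993}{15276}$: it must be reached simultaneously by the Buriev tail $X/H$, the Type I bound of Lemma \ref{SIest}, and the Type II bound of Lemma \ref{SIIest}. This is precisely why $H$, $Q$, and the specific exponent pairs $\left(\tfrac{2}{40},\tfrac{33}{40}\right)$ and $\left(\tfrac{214}{845},\tfrac{199}{338}\right)$ inside those lemmas were chosen so carefully. The admissible ranges $L\gg X^{4/9}$ (Type I) and $X^{1/9}\ll L\ll X^{1/3}$ (Type II) were engineered to interlock with the Heath--Brown parameters $U,V,Z$ above; any shrinkage of either $L$-range would leave a gap in $L$ that the decomposition could not absorb, and any weakening in the Buriev truncation $H$ would likewise break the balance.
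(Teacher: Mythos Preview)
Your approach mirrors the paper's: pass from $\log p$ over primes to $\Lambda(n)$, strip the floor via Lemma~\ref{Buriev}, decompose by Heath--Brown with $(U,V,Z)=(X^{1/9},X^{1/3},[X^{4/9}]+\tfrac12)$, and feed the resulting pieces into Lemmas~\ref{SIest} and~\ref{SIIest}. Two small corrections: the Buriev tail is bounded in the paper (following Cai) as $X^\varepsilon\bigl(X/H + H^{1/2}X^{c/2}\bigr)$, so your claim $R\ll X^{1+\varepsilon}/H$ needs the extra check that $H^{1/2}X^{c/2}\ll X^{13993/15276}$ (it holds for $c<16559/15276$); and Lemmas~\ref{SIest}--\ref{SIIest} are stated with the full sum $\sum_{|h|\le H}c_h(t)$ already inside, so once you interchange the $h$-sum into each Heath--Brown piece there is no separate $h$-summation left to perform.
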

\begin{proof}
In order to prove the lemma  we will use the formula
\begin{equation}\label{Lambdalog2}
S(t)=S^\ast(t)+\mathcal{O}\big(X^{\frac{1}{2}}\big)\,,
\end{equation}
where
\begin{equation}\label{Sast}
S^\ast(t)=\sum\limits_{X/2<n\leq X}\Lambda(n)e(t [n^c])\,.
\end{equation}
By \eqref{H}, \eqref{Sast} and Lemma \ref{Buriev} with $x=t$ and $y=n^c$ we get
\begin{align}\label{Sastformula}
S^\ast(t)&=\sum\limits_{X/2<n\leq X}\Lambda(n)e(t n^c-t\{n^c\})=\sum\limits_{X/2<n\leq X}\Lambda(n)e(t n^c)e(-t\{n^c\})\nonumber\\
&=\sum\limits_{X/2<n\leq X}\Lambda(n)e(tn^c)\left(\sum\limits_{|h|\leq H}c_h(t)e(hn^c)
+\mathcal{O}\Bigg(\min\left(1, \frac{1}{H\|n^c\|}\right)\Bigg)\right)\nonumber\\
&=\sum\limits_{|h|\leq H}c_h(t)\sum\limits_{X/2<n\leq X}\Lambda(n)e((h+t)n^c)
+\mathcal{O}\Bigg((\log X)\sum\limits_{X/2<n\leq X}\min\left(1, \frac{1}{H\|n^c\|}\right)\Bigg)\nonumber\\
&=S_0^\ast(t)+\mathcal{O}\left((\log X)\sum\limits_{X/2<n\leq X}\min\left(1, \frac{1}{H\|n^c\|}\right)\right)\,,
\end{align}
where
\begin{equation}\label{Sast0}
S_0^\ast(t)=\sum\limits_{|h|\leq H}c_h(t)\sum\limits_{X/2<n\leq X}\Lambda(n)e((h+t)n^c)\,.
\end{equation}
Arguing as in (\cite{Cai}, Lemma 3.3) we find
\begin{equation}\label{suminest}
\sum\limits_{X/2<n\leq X}\min\left(1, \frac{1}{H\|n^c\|}\right)\ll X^\varepsilon\left(H^{-1}X+H^{\frac{1}{2}}X^{\frac{c}{2}}\right)\,.
\end{equation}
Let
\begin{equation*}
U=X^{\frac{1}{9}}\,,\quad V=X^{\frac{1}{3}}\,,\quad Z=\big[X^{\frac{4}{9}}\big]+\frac{1}{2}\,.
\end{equation*}
According to Lemma \ref{Heath-Brown}, the sum $S_0^\ast(t)$
can be decomposed into $O\Big(\log^{10}X\Big)$ sums, each of which is either of Type I
\begin{equation*}
\sum\limits_{|h|\leq H}c_h(t)\sum\limits_{m\sim M}a(m)\sum\limits_{l\sim L}e((h+t)m^cl^c)\,,
\end{equation*}
where
\begin{equation*}
L \gg Z\,, \quad  LM\asymp X\,, \quad |a(m)|\ll m^\varepsilon\,,
\end{equation*}
or of Type II
\begin{equation*}
\sum\limits_{|h|\leq H}c_h(t)\sum\limits_{m\sim M}a(m)\sum\limits_{l\sim L}b(l)e((h+t)m^cl^c)\,,
\end{equation*}
where
\begin{equation*}
U \ll L \ll V\,, \quad  LM\asymp X\,, \quad |a(m)|\ll m^\varepsilon\,,\quad |b(l)|\ll l^\varepsilon\,.
\end{equation*}
Using \eqref{Sast0}, Lemma \ref{SIest} and  Lemma \ref{SIIest} we obtain
\begin{equation}\label{Sast0est}
S_0^\ast(t)\ll X^{\frac{13993}{15276}+\varepsilon}\,.
\end{equation}
Taking into account \eqref{H}, \eqref{Lambdalog2}, \eqref{Sastformula}, \eqref{suminest}
and  \eqref{Sast0est} we establish the statement in the lemma.
\end{proof}

Bearing in mind \eqref{Gamma32}, Cauchy's inequality, Lemma \ref{intLintI}, Lemma \ref{IntK2}
and Lemma \ref{SalphaXest} we deduce
\begin{align}\label{Gamma32est}
\Gamma_3^{(2)}&\ll\max\limits_{\Delta\leq t\leq 1-\Delta}|S(t)|
\left(\int\limits_{\Delta}^{1-\Delta}|S(t)|^2\,dt\right)^{\frac{1}{2}}
\left(\int\limits_{\Delta}^{1-\Delta}|K(t)|^2\,dt\right)^{\frac{1}{2}}\nonumber\\
&\ll\max\limits_{\Delta\leq t\leq 1-\Delta}|S(t)|
\left(\int\limits_{0}^{1}|S(t)|^2\,dt\right)^{\frac{1}{2}}
\left(\int\limits_{0}^{1}|K(t)|^2\,dt\right)^{\frac{1}{2}}\nonumber\\
&\ll X^{\frac{29269}{15276}+\varepsilon}\ll\frac{ X^{3-c}}{\log X}\,.
\end{align}

\subsection{Estimation of $\mathbf{\Gamma_3}$}
\indent

Summarizing \eqref{Gamma3decomp}, \eqref{Gamma31est} and \eqref{Gamma32est} we obtain
\begin{equation}\label{Gamm3est}
\Gamma_3\ll\frac{ X^{3-c}}{\log X}\,.
\end{equation}

\section{Upper bound of $\mathbf{\Gamma_2}$}\label{SectionGamma2}
\indent

In this section we need a lemma that gives us information about the upper bound
of the number of solutions of the binary equation corresponding to \eqref{1}.
\begin{lemma}\label{Thenumberofsolutions}
Let $1<c<3$, $c\neq2$  and $N_0$ is a sufficiently large positive integer.
Then for the number of solutions $B_0(N_0)$ of the diophantine equation
\begin{equation}\label{Binary}
[p_1^c]+[p_2^c]=N_0
\end{equation}
in prime numbers $p_1,\,p_2 \in \left(N_0^{\frac{1}{c}}/2\,,\, N_0^{\frac{1}{c}}\right]$ we have that
\begin{equation*}
B_0(N_0)\ll \frac{ N_0^{\frac{2}{c}-1}}{\log^2N_0}\,.
\end{equation*}
\end{lemma}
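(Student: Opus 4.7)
The plan is to treat $B_0(N_0)$ as a binary Piatetski--Shapiro Goldbach problem and apply either the Hardy--Littlewood circle method or a Selberg upper-bound sieve. Set $X_0 = N_0^{1/c}$; since $[p_1^c] + [p_2^c] = N_0$ forces $p_1^c + p_2^c \in [N_0, N_0 + 2)$, both primes satisfy $p_i \asymp X_0$, and the target bound $X_0^{2-c}/\log^2 X_0$ is precisely the expected heuristic size of the main term.

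Using the orthogonality relation $\int_0^1 e(tn)\,dt = \mathbf{1}_{n=0}$ for $n \in \mathbb{Z}$, I would write
$$B_0(N_0) = \int_0^1 U(t)^2 \,e(-N_0 t)\,dt, \qquad U(t) = \sum_{p \leq X_0 + 1} e\bigl(t [p^c]\bigr),$$
and split $[0,1]$ into major arcs around rationals $a/q$ with $q \leq Q$ and their complementary minor arcs. On the major arcs, Lemma~\ref{SIasympt} (combined with partial summation to remove the $\log p$ weight) replaces $e(t[p^c])$ by $e(tp^c)$ up to an acceptable error and reduces matters to an archimedean integral whose evaluation gives a main contribution of order $X_0^{2-c}/\log^2 X_0$ times a bounded singular series, which is already of the desired size. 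For the minor arcs, the basic Parseval identity yields
$$\int_0^1 |U(t)|^2\,dt \;=\; \sum_{n} \#\{p : [p^c] = n\}^{2} \;\leq\; \pi(X_0) \;\ll\; X_0/\log X_0,$$
since $\#\{p : [p^c] = n\} \in \{0,1\}$ for $c > 1$ and $n$ sufficiently large.

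The principal obstacle is that this trivial Parseval bound overshoots the target by a factor $X_0^{c-1}$, so a refined mean-value estimate for $U(t)$ off the major arcs is required. The natural approach is to combine the pointwise bound analogous to Lemma~\ref{SalphaXest} with a Bombieri--Vinogradov-type second moment as in Lemma~\ref{Bomb-Vin-Dim}, giving enough saving on the minor arcs to absorb their contribution into $X_0^{2-c}/\log^2 X_0$. Equivalently, and more cleanly from a technical standpoint, one may apply Selberg's upper-bound sieve directly to the pair $(p_1, p_2)$: with sifting level $z$ chosen so that $z^2 \ll X_0^{c-1}$, the sieve's congruence count
$$\mathcal{N}(D_1, D_2) = \#\bigl\{(n_1, n_2) : D_i \mid n_i,\; n_i \asymp X_0,\; [n_1^c] + [n_2^c] = N_0\bigr\}$$
evaluates, for each fixed $n_1$, to $\ll X_0^{1-c}/D_2 + 1$ multiples of $D_2$ in a short interval of length $\asymp X_0^{1-c}$; summing and applying the standard Selberg optimisation delivers the bound $\ll X_0^{2-c}/\log^2 X_0$. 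The delicate point in the sieve route is controlling the $X_0/D_1$ error from the short-interval count when the interval has length less than $1$, and this is exactly where the constraint $z^2 \ll X_0^{c-1}$ becomes binding for $c$ close to $1$. I would carry out the Selberg-sieve route, following the outline in \cite{Dimitrov3} and \cite{Laporta-Tolev}.
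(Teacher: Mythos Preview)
Your proposal has genuine gaps in both routes you suggest for the minor-arc contribution, and neither recovers the full range $1<c<3$, $c\neq 2$ demanded by the lemma.  For route~(a), Lemma~\ref{SalphaXest} is proved only for $1<c<\tfrac{16559}{15276}$ (its exponent-pair choices are tuned to that narrow window), so it cannot be invoked here; and Lemma~\ref{Bomb-Vin-Dim} is a major-arc tool requiring $|t|\le\Delta$, not a minor-arc saving device.  For route~(b), your own formula exposes the problem: for fixed $n_1$ the condition $[n_2^c]=N_0-[n_1^c]$ confines $n_2$ to an interval of length $\asymp X_0^{1-c}<1$, so the term $X_0^{1-c}/D_2$ is always $<1$ and the ``$+1$'' dominates for every $D_2\ge 1$.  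Summing over $n_1\equiv 0\pmod{D_1}$ then gives $\mathcal{N}(D_1,D_2)\ll X_0/D_1$ with no dependence on $D_2$, so the Selberg machinery never sees the sparseness responsible for the factor $X_0^{1-c}$.  Your constraint $z^2\ll X_0^{c-1}$ does not repair this: for $c$ near $1$ it forces $z$ bounded and kills the $\log^{-2}$ saving, while for larger $c$ the $O(1)$ error term still swamps the main term.

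The paper takes a completely different and much more elementary path on the minor arc.  Writing $B_2(X_0)=\int_{\Delta_0}^{1-\Delta_0}S_0^2(t)e(-N_0t)\,dt$ with $S_0(t)=\sum_{X_0/2<p\le X_0}e(t[p^c])\log p$ and $\Delta_0=(\log X_0)^{A_0}X_0^{-c}$, one integrates by parts against $e(-N_0t)$: this gains a factor $N_0^{-1}=X_0^{-c}$, the boundary terms are trivially $\ll X_0^2/N_0=X_0^{2-c}$, and the paper bounds the remaining Stieltjes integral $\int e(-N_0t)\,d(S_0^2(t))$ by $X_0^2$ via a curve-length argument.  No exponential-sum estimate beyond the trivial bound $|S_0(t)|\ll X_0$ is used, and the argument is insensitive to $c$ in the whole range $1<c<3$.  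This is the missing idea in your sketch; you should replace the sieve/pointwise-bound discussion with this integration-by-parts step.
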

\begin{proof}
Define
\begin{equation}\label{BX0}
B(X_0)=\sum\limits_{X_0/2<p_1,p_2\leq X_0\atop{[p_1^c]+[p_2^c]=N_0}}\log p_1\log p_2\,,
\end{equation}
where
\begin{equation}\label{X0}
X_0=N_0^{\frac{1}{c}}\,.
\end{equation}
By \eqref{BX0} we write
\begin{align}\label{BX0est1}
B(X_0)&=\int\limits_{-\Delta_0}^{1-\Delta_0} S_0^2(t) e(-N_0t)\,dt\nonumber\\
&=B_1(X_0)+B_2(X_0)\,,
\end{align}
where

\begin{align}
\label{S0t}
&S_0(t)=\sum\limits_{X_0/2<p\leq X_0} e(t [p^c])\log p\,,\\
\label{S0toverline}
&\overline{S}_0(t)=\sum\limits_{X_0/2<p\leq X_0} e(t p^c)\log p\,,\\
\label{Delta0}
&\Delta_0=\frac{(\log X_0)^{A_0}}{X_0^c}\,,\quad A_0>10\,,\\
\label{B1}
&B_1(X_0)=\int\limits_{-\Delta_0}^{\Delta_0}S_0^2(t)e(-N_0t)\,dt\,,\\
\label{B2}
&B_2(X_0)=\int\limits_{\Delta_0}^{1-\Delta_0}S_0^2(t)e(-N_0t)\,dt\,.
\end{align}
First we estimate $B_1(X_0)$.
Put
\begin{align}
\label{I0t}
&I_0(t)=\int\limits_{X_0/2}^{X_0}e(t y^c)\,dy\,,\\
\label{PsiDeltaX}
&\Psi_{\Delta_0}(X_0)=\int\limits_{-\Delta_0}^{\Delta_0}I_0^2(t)e(-N_0t)\,dt\,,\\
\label{PsiX}
&\Psi(X_0)=\int\limits_{-\infty}^{\infty}I_0^2(t)e(-N_0t)\,dt\,.
\end{align}
Using \eqref{IalphaXest}, \eqref{I0t} and \eqref{PsiX} we obtain
\begin{align}\label{Psiest}
\Psi(X_0)&=\int\limits_{-X_0^{-c}}^{X_0^{-c}}I_0^2(t)e(-N_0t)\,dt
+\int\limits_{|t|>X_0^{-c}}I_0^2(t)e(-N_0t)\,dt\,,\nonumber\\
&\ll\int\limits_{-X_0^{-c}}^{X_0^{-c}}X^2_0\,dt
+\int\limits_{X_0^{-c}}^{\infty} \left(\frac{X_0^{1-c}}{t}\right)^2\,dt\,,\nonumber\\
&\ll X_0^{2-c}\,.
\end{align}
On the other hand  \eqref{Sld-Soverline}, \eqref{S0t} -- \eqref{B1}, \eqref{PsiDeltaX},
Lemma \ref{SIasympt} and the trivial estimations
\begin{equation}\label{S0I0trivest}
S_0(t)\ll X_0 \,, \quad I_0(t)\ll X_0
\end{equation}
imply
\begin{align}\label{B1PsiDelta}
B_1(X_0)-\Psi_{\Delta_0}(X_0)
&\ll\int\limits_{-\Delta_0}^{\Delta_0}|S_0^2(t)-I_0^2(t)|\,dt\nonumber\\
&\ll\int\limits_{-\Delta_0}^{\Delta_0}
\big|S_0(t)-I_0(t)\big|\Big(|S_0(t)|+|I_0(t)|\Big)\,dt\nonumber\\
&\ll\Big(\max\limits_{|t|\leq\Delta_0}\big|\overline{S}_0(t)-I_0(t)\big|+\Delta_0 X_0\Big)
\left(\int\limits_{-\Delta_0}^{\Delta_0}|S_0(t)|\,dt
+\int\limits_{-\Delta_0}^{\Delta_0}|I_0(t)|\,dt\right)\nonumber\\
&\ll\Bigg(\frac{X_0}{e^{(\log X_0)^{\frac{1}{5}}}}+\Delta_0 X_0\Bigg)\Delta_0 X_0\nonumber\\
&\ll\frac{ X_0^{2-c}}{e^{(\log X_0)^{\frac{1}{6}}}}\,.
\end{align}
From \eqref{IalphaXest}, \eqref{Delta0}, \eqref{PsiDeltaX} and \eqref{PsiX} it follows
\begin{align}\label{PsiPsiDelta}
|\Psi(X_0)-\Psi_{\Delta_0}(X_0)|&\ll\int\limits_{\Delta_0}^{\infty}|I_0(t)|^2\,dt
\ll\frac{1}{X_0^{2(c-1)}}\int\limits_{\Delta_0}^{\infty}\frac{dt}{t^2}\nonumber\\
&\ll \frac{1}{X_0^{2(c-1)}\Delta_0}\ll\frac{X_0^{2-c}}{\log X_0}\,.
\end{align}
Now \eqref{Psiest}, \eqref{B1PsiDelta} and \eqref{PsiPsiDelta} and the identity
\begin{equation*}
B_1(X_0)=B_1(X_0)-\Psi_{\Delta_0}(X_0)+\Psi_{\Delta_0}(X_0)-\Psi(X_0)+\Psi(X_0)
\end{equation*}
give us
\begin{equation}\label{B1X0est}
B_1(X_0)\ll  X_0^{2-c}\,.
\end{equation}
Further we estimate $B_2(X_0)$.
By \eqref{X0}, \eqref{B2}, \eqref{S0I0trivest} and partial integration we deduce
\begin{align}\label{B2'est}
B_2(X_0)&=-\frac{1}{2\pi i}\int\limits_{\Delta_0}^{1-\Delta_0}\frac{S_0^2(t)}{N_0}\,d\,e(-N_0t)\nonumber\\
&=-\frac{S_0^2(t)e(-N_0t)}{2\pi iN_0}\Bigg|_{\Delta_0}^{1-\Delta_0}
+\frac{1}{2\pi iN_0}\int\limits_{\Delta_0}^{1-\Delta_0}e(-N_0t)\,d\Big(S_0^2(t)\Big)\nonumber\\
&\ll X_0^{2-c}+X_0^{-c}|\Omega|\,,
\end{align}
where
\begin{equation}\label{Omega}
\Omega=\int\limits_{\Delta_0}^{1-\Delta_0}e(-N_0t)\,d\Big(S_0^2(t)\Big)\,.
\end{equation}
Next we consider $\Omega$. Put
\begin{equation}\label{Gammat}
\Gamma \, :\, z=f(t)=S_0^2(t)\,,\quad \Delta_0\leq t\leq 1-\Delta_0\,.
\end{equation}
Now \eqref{Omega} and \eqref{Gammat} imply
\begin{equation}\label{Omegaest1}
\Omega=\int\limits_{\Gamma} e\Big(-N_0f^{-1}(z)\Big)\,dz\,.
\end{equation}
Using \eqref{S0I0trivest}, \eqref{Gammat} and that the integral \eqref{Omegaest1} is independent of path we derive
\begin{equation}\label{Omegaest2}
\Omega=\int\limits_{\overline{\Gamma}} e\Big(-N_0f^{-1}(z)\Big)\,dz\ll\int\limits_{\overline{\Gamma}} |dz|
\ll |f(\Delta_0)|+|f(1-\Delta_0)| \ll X_0^2\,,
\end{equation}
where $\overline{\Gamma}$ is the line segment connecting the points $f(\Delta_0)$ and $f(1-\Delta_0)$.
Bearing in mind \eqref{B2'est} and \eqref{Omegaest2} we  find
\begin{equation}\label{B2X0est}
B_2(X_0)\ll X_0^{2-c}\,.
\end{equation}
Summarizing \eqref{BX0est1}, \eqref{B1X0est} and \eqref{B2X0est} we obtain
\begin{equation}\label{BX0est}
B(X_0)\ll X_0^{2-c}\,.
\end{equation}
Taking into account \eqref{BX0}, \eqref{X0} and \eqref{BX0est}, for the number of solutions $B_0(N_0)$
of the  diophantine equation \eqref{Binary} we get
\begin{equation*}
B_0(N_0)\ll \label{B1Psitau}\frac{N_0^{\frac{2}{c}-1}}{\log^2N_0}\,.
\end{equation*}
The lemma is proved.
\end{proof}

We are now ready to  estimate the sum $\Gamma_2$.
We denote by $\mathcal{F}(X)$ the set of all primes
$X/2<p\leq X$ such that $p-1$ has a divisor belongs to the interval $(D,X/D)$.
The inequality $xy\leq x^2+y^2$ and  \eqref{Gamma2} give us
\begin{align*}
\Gamma_2^2&\ll(\log X)^6\sum\limits_{X/2<p_1,...,p_6\leq X
\atop{[p^c_1]+[p^c_2]+[p^c_3]=N
\atop{[p^c_4]+[p^c_5]+[p^c_6]=N}}}
\left|\sum\limits_{d|p_1-1\atop{D<d<X/D}}\chi_4(d)\right|
\left|\sum\limits_{t|p_4-1\atop{D<t<X/D}}\chi_4(t)\right|\\
&\ll(\log X)^6\sum\limits_{X/2<p_1,...,p_6\leq X
\atop{[p^c_1]+[p^c_2]+[p^c_3]=N
\atop{[p^c_4]+[p^c_5]+[p^c_6]=N
\atop{p_4\in\mathcal{F}(X)}}}}\left|\sum\limits_{d|p_1-1\atop{D<d<X/D}}\chi_4(d)\right|^2\,.
\end{align*}
The summands in the last sum for which $p_1=p_4$ can be estimated with
$\mathcal{O}\big(X^{3+\varepsilon}\big)$.\\
Thus
\begin{equation}\label{Gamma2est1}
\Gamma_2^2\ll(\log X)^6\Sigma_0+X^{3+\varepsilon}\,,
\end{equation}
where
\begin{equation}\label{Sigma0}
\Sigma_0=\sum\limits_{X/2<p_1\leq X}
\left|\sum\limits_{d|p_1-1\atop{D<d<X/D}}\chi_4(d)\right|^2
\sum\limits_{X/2<p_4\leq X\atop{p_4\in\mathcal{F}(X)
\atop{p_4\neq p_1}}}\sum\limits_{X/2<p_2, p_3, p_5, p_6\leq X
\atop{[p^c_1]+[p^c_2]+[p^c_3]=N
\atop{[p^c_4]+[p^c_5]+[p^c_6]=N}}}1\,.
\end{equation}
Now  \eqref{Sigma0} and Lemma \ref{Thenumberofsolutions} yield
\begin{equation}\label{Sigma0est}
\Sigma_0\ll \frac{X^{4-2c}}{\log^4X}\,\Sigma'_0\,\Sigma''_0\,,
\end{equation}
where
\begin{equation*}
\Sigma'_0=\sum\limits_{X/2<p\leq X}\left|\sum\limits_{d|p-1\atop{D<d<X/D}}\chi_4(d)\right|^2\,,
\quad \Sigma''_0=\sum\limits_{X/2<p\leq X\atop{p\in\mathcal{F}(X)}}1\,.
\end{equation*}
Applying Lemma \ref{Hooley1} we obtain
\begin{equation}\label{Sigma0'est}
\Sigma'_0\ll\frac{X(\log\log X)^7}{\log X}\,.
\end{equation}
Using Lemma \ref{Hooley2} we get
\begin{equation}\label{Sigma0''est}
\Sigma''_0\ll\frac{X(\log\log X)^3}{(\log X)^{1+2\theta_0}}\,,
\end{equation}
where $\theta_0$ is denoted by  \eqref{theta0}.

Finally \eqref{Gamma2est1}, \eqref{Sigma0est},
\eqref{Sigma0'est} and \eqref{Sigma0''est} imply
\begin{equation}\label{Gamma2est2}
\Gamma_2\ll\frac{ X^{3-c}(\log\log X)^5}{(\log X)^{\theta_0}}\,.
\end{equation}

\section{Asymptotic formula for $\mathbf{\Gamma_1}$}\label{SectionGamma1}
\indent

Consider the sum $\Gamma_1$.
From \eqref{Gamma1}, \eqref{Ild} and \eqref{Ilddecomp} we deduce
\begin{equation}\label{Gamma1decomp}
\Gamma_1=\Gamma_1^{(1)}+\Gamma_1^{(2)}\,,
\end{equation}
where
\begin{equation}\label{Gamma1i}
\Gamma_1^{(i)}=\sum\limits_{d\leq D}\chi_4(d)I_{1,d}^{(i)}(N)\,,\;\; i=1,\,2.
\end{equation}

\subsection{Estimation of $\mathbf{\Gamma_1^{(1)}}$}
\indent

First we consider $\Gamma_1^{(1)}$.
Using formula \eqref{Ild1est} for $J=(X/2,X]$, \eqref{Gamma1i}
and treating the reminder term by the same way as for $\Gamma_3^{(1)}$
we find
\begin{equation}\label{Gamma11est1}
\Gamma_1^{(1)}=\Phi(X)\sum\limits_{d\leq D}\frac{\chi_4(d)}{\varphi(d)}
+\mathcal{O}\bigg(\frac{X^{3-c}}{\log X}\bigg)\,,
\end{equation}
where
\begin{equation}\label{PhiX}
\Phi(X)=\int\limits_{-\infty}^{\infty}I^3(t)e(-Nt)\,dt\,.
\end{equation}

\begin{lemma}\label{IIIest} For the integral denoted by \eqref{PhiX} the asymptotic formula
\begin{equation*}
\Phi(X)=\frac{\Gamma^3\left(1+\frac{1}{c}\right)}{\Gamma\left(\frac{3}{c}\right)}\left(1-\frac{1}{2^{3-c}}\right)X^{3-c}
+\mathcal{O}\Bigg(\frac{ X^{3-c}}{e^{(\log X)^{\frac{1}{6}}}}\Bigg)
\end{equation*}
holds.
\end{lemma}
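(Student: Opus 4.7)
The plan is to reduce $\Phi(X)$ to a classical Dirichlet-type singular integral via successive changes of variable, then evaluate it by Fourier inversion. First, in each copy of $I(t)=\int_{X/2}^{X}e(ty^c)\,dy$ I substitute $y=Xu$ and then $v=u^c$, obtaining $I(t)=(X/c)\,F(Nt)$, where
\begin{equation*}
F(\tau)=\int_{1/2^c}^{1}v^{1/c-1}e(\tau v)\,dv.
\end{equation*}
Rescaling $\tau=Nt=X^{c}t$ in the outer integral then gives
\begin{equation*}
\Phi(X)=\frac{X^{3-c}}{c^3}\int_{-\infty}^{\infty}F^3(\tau)e(-\tau)\,d\tau,
\end{equation*}
so the task reduces to evaluating this $\tau$-integral in closed form.

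To handle the truncated lower limit $1/2^c$, I write $F=F_1-F_2$, where $F_1$ and $F_2$ extend the integration to $[0,1]$ and $[0,1/2^c]$ respectively. The substitution $w=2^c v$ in $F_2$ yields the scaling identity $F_2(\tau)=\tfrac12 F_1(\tau/2^c)$. Expanding $F^3=(F_1-F_2)^3$ produces four integrals, each treated by Fourier inversion. The principal one,
\begin{equation*}
\int_{-\infty}^{\infty}F_1^3(\tau)e(-\tau)\,d\tau=\frac{\Gamma(1/c)^3}{\Gamma(3/c)},
\end{equation*}
is the classical Dirichlet integral on the simplex $\{v_1+v_2+v_3=1,\ v_i\geq 0\}$, obtained by recognizing $F_1$ as the inverse Fourier transform of $v^{1/c-1}\mathbf{1}_{[0,1]}(v)$ and invoking the triple-convolution identity at the point $1$. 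The remaining three integrals reduce, by the same inversion combined with the substitution $\tau\mapsto 2^c\tau$, to Dirichlet-type pieces weighted by powers of $2^{c-1}$.

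Collecting the four pieces, the weights should combine into the factor $(1-1/2^{3-c})$ multiplying $\Gamma(1/c)^3/\Gamma(3/c)$; absorbing $1/c^3$ via the identity $\Gamma(1+1/c)=(1/c)\,\Gamma(1/c)$ then matches the stated main term. The error $\mathcal{O}\bigl(X^{3-c}/e^{(\log X)^{1/6}}\bigr)$ arises from truncating the $\tau$-integral at a high threshold and bounding the tail via the decay $F(\tau)=\mathcal{O}(1/|\tau|)$, which follows from a single integration by parts in the definition of $F$. I expect the main obstacle to be the bookkeeping in combining the four contributions: each carries a factor of the form $2^{\alpha c+\beta}$, and only a careful accounting produces the clean coefficient $(1-1/2^{3-c})$ after the powers of $2$ conspire in the recombination.
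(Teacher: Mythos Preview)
Your approach is entirely different from the paper's: the paper never evaluates the singular integral directly. Instead it writes $\Phi(X)=(\Theta_1-\Omega_S)+\Omega_S+\Theta_2$, where $\Omega_S=\int_{-\Delta}^{\Delta}S^3(t)e(-Nt)\,dt$ is the major-arc piece built from the \emph{prime} sum $S(t)$; it shows $\Theta_1-\Omega_S$ and the tail $\Theta_2$ are negligible by comparing $S$ with $I$, and then imports the asymptotic for $\Omega_S$ wholesale from Laporta--Tolev rather than computing the constant itself.

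Your direct route has a genuine gap at the recombination step. The cross terms in $(F_1-F_2)^3$ do \emph{not} reduce to rescaled copies of $\int F_1^3(\tau)e(-\tau)\,d\tau$: after the substitution $\tau\mapsto 2^c\tau$ one obtains, for instance, $2^c\int F_1(2^c\sigma)^2F_1(\sigma)e(-2^c\sigma)\,d\sigma$, which is not a multiple of the principal Dirichlet integral because the evaluation point has moved from $1$ to $2^c$ and the upper constraints $v_i\le 1$ become active there. The convolution picture makes this decisive. With $h(v)=v^{1/c-1}\mathbf{1}_{[2^{-c},1]}(v)$, your own Fourier-inversion argument gives
\[
\int_{-\infty}^{\infty}F^3(\tau)e(-\tau)\,d\tau=(h*h*h)(1)
=\int_{\substack{v_1+v_2+v_3=1\\ 2^{-c}\le v_i\le 1}}(v_1v_2v_3)^{1/c-1}\,d\sigma,
\]
and for every $c$ in the paper's range $1<c<\tfrac{16559}{15276}<\log_2 3$ this domain is empty, since $3\cdot 2^{-c}>1$. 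So a correct execution of your plan yields $\Phi(X)=0$ exactly, not the stated positive main term; the hoped-for conspiracy of powers of $2$ producing $(1-2^{c-3})$ does not occur. (This also shows your account of the error term is off: the $\tau$-integral converges absolutely, so no truncation error arises.)
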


\begin{proof}
From \eqref{PhiX} write
\begin{equation}\label{IIIdecomp}
\Phi(X)= \Theta_1-\Omega_S+\Omega_S+\Theta_2\,,
\end{equation}
where \begin{align}
\label{Theta1}
&\Theta_1=\int\limits_{-\Delta}^\Delta I^3(t) e(-Nt)\,dt\,,\\
\label{Theta2}
&\Theta_2=\int\limits_{|t|> \Delta} I^3(t) e(-Nt)\,dt\,,\\ \label{OmegaS}
&\Omega_S= \int\limits_{-\Delta}^\Delta S^3(t) e(-Nt)\,dt\,.
\end{align}

By \eqref{Delta}, \eqref{SalphaX}, \eqref{Sldoverline}, \eqref{Sld-Soverline}, \eqref{Theta1},
\eqref{OmegaS}, Lemma \ref{SIasympt} and Lemma \ref{intLintI}  we obtain
\begin{align}\label{Theta1OmegaS}
\Theta_1-\Omega_S
&\ll\int\limits_{-\Delta}^{\Delta}\big|S^3(t)-I^3(t)\big|\,dt\nonumber\\
&\ll\int\limits_{-\Delta}^{\Delta}
\big|S(t)-I(t)\big|\Big(\big|S^2(t)-I^2(t)\big|\Big)\,dt\nonumber\\
&\ll\Big(\max\limits_{|t|\leq\Delta}\big|\overline{S}(t)-I(t)\big|+\Delta X\Big)
\left(\int\limits_{-\Delta}^{\Delta}|S(t)|^2\,dt
+\int\limits_{-\Delta}^{\Delta}|I(t)|^2\,dt\right)\nonumber\\
&\ll\Bigg(\frac{X}{e^{(\log X)^{\frac{1}{5}}}}+\Delta X\Bigg) X^{2-c}\log^2X\nonumber\\
&\ll\frac{ X^{3-c}}{e^{(\log X)^{\frac{1}{6}}}}\,.
\end{align}
Arguing as in  \cite{Laporta-Tolev}  we get
\begin{equation}\label{OmegaSformula}
\Omega_S =\frac{\Gamma^3\left(1+\frac{1}{c}\right)}{\Gamma\left(\frac{3}{c}\right)}\left(1-\frac{1}{2^{3-c}}\right)X^{3-c}
+\mathcal{O}\Bigg(\frac{ X^{3-c}}{e^{(\log X)^{\frac{1}{3}-\varepsilon}}}\Bigg) \,.
\end{equation} From \eqref{Delta}, \eqref{IalphaXest} and \eqref{Theta2} it follows
\begin{align}\label{Theta2est}
\Theta_2&\ll\int\limits_{\Delta}^{\infty}|I(t)|^3\,dt
\ll\frac{1}{X^{3(c-1)}}\int\limits_{\Delta}^{\infty}\frac{dt}{t^3}\nonumber\\
&\ll \frac{1}{X^{3(c-1)}\Delta^2}\ll X^{3-c-\varepsilon}\,.
\end{align}
Now the lemma follows from  \eqref{IIIdecomp}, \eqref{Theta1OmegaS}, \eqref{OmegaSformula}  and \eqref{Theta2est}.
\end{proof}
According to \cite{Dimitrov3} we have
\begin{equation}\label{sumchiphi}
\sum\limits_{d\leq D}\frac{\chi_4(d)}{\varphi(d)}=
\frac{\pi}{4}\prod\limits_p \left(1+\frac{\chi_4(p)}{p(p-1)}\right)+\mathcal{O}\Big(X^{-1/20}\Big)\,.
\end{equation}
From \eqref{Gamma11est1} and \eqref{sumchiphi} we obtain
\begin{equation}\label{Gamma11est2}
\Gamma_1^{(1)}=\frac{\pi}{4}\prod\limits_p \left(1+\frac{\chi_4(p)}{p(p-1)}\right) \Phi(X)
+\mathcal{O}\bigg(\frac{ X^{3-c}}{\log X}\bigg)+\mathcal{O}\Big(\Phi(X)X^{-1/20}\Big)\,.
\end{equation}
Now \eqref{Gamma11est2} and Lemma \ref{IIIest} yield
\begin{equation}\label{Gamma11est3}
\Gamma_1^{(1)}=\frac{\pi}{4}\prod\limits_p \left(1+\frac{\chi_4(p)}{p(p-1)}\right)
\frac{\Gamma^3\left(1+\frac{1}{c}\right)}{\Gamma\left(\frac{3}{c}\right)}\left(1-\frac{1}{2^{3-c}}\right)X^{3-c}
+\mathcal{O}\bigg(\frac{ X^{3-c}}{\log X}\bigg)\,.
\end{equation}

\subsection{Estimation of $\mathbf{\Gamma_1^{(2)}}$}
\indent

Arguing as in the estimation of $\Gamma_3^{(2)}$ we get
\begin{equation} \label{Gamma12est}
\Gamma_1^{(2)}\ll\frac{ X^{3-c}}{\log X}\,.
\end{equation}

\subsection{Estimation of $\mathbf{\Gamma_1}$}
\indent

Summarizing  \eqref{Gamma1decomp}, \eqref{Gamma11est3} and \eqref{Gamma12est}  we deduce
\begin{equation}\label{Gamma1est}
\Gamma_1=\frac{\pi}{4}\prod\limits_p \left(1+\frac{\chi_4(p)}{p(p-1)}\right)
\frac{\Gamma^3\left(1+\frac{1}{c}\right)}{\Gamma\left(\frac{3}{c}\right)}\left(1-\frac{1}{2^{3-c}}\right)X^{3-c}
+\mathcal{O}\bigg(\frac{ X^{3-c}}{\log X}\bigg)\,.
\end{equation}

\section{Proof of the Theorem}\label{Sectionfinal}
\indent

Bearing in mind  \eqref{X}, \eqref{Gamma0decomp}, \eqref{Gamm3est}, \eqref{Gamma2est2} and \eqref{Gamma1est}
we establish  asymptotic formula \eqref{asymptoticformula1}.

\vskip18pt
\footnotesize
\begin{flushleft}
S. I. Dimitrov\\
Faculty of Applied Mathematics and Informatics\\
Technical University of Sofia \\
8, St.Kliment Ohridski Blvd. \\
1756 Sofia, BULGARIA\\
e-mail: sdimitrov@tu-sofia.bg\\
\end{flushleft}

\end{document}